\newcommand{\E}{\mathcal{E}}
\newcommand{\F}{\mathcal F}
\renewcommand{\H}{\mathcal{H}}
\newcommand{\Sc}{\mathcal S}
\newcommand{\N}{\mathbb{N}}
\newcommand{\R}{\mathbb{R}}
\renewcommand{\SS}{\mathbb{S}}
\newcommand{\Om}{\Omega}
\newcommand{\g}{\gamma}
\newcommand{\de}{\delta}
\newcommand{\e}{\varepsilon}
\renewcommand{\k}{\kappa}
\renewcommand{\l}{\lambda}
\newcommand{\s}{\sigma}
\newcommand{\Div}{{\rm div}\,}
\newcommand{\dist}{{\rm dist}}
\newcommand{\weakstar}{\stackrel{*}{\rightharpoonup}}
\newcommand{\pa}{\partial}
\newcommand{\cc}{\subset\subset}
\newcommand{\cl}{\mathrm{cl}\,}
\newcommand{\U}{\mathcal{U}}
\newcommand{\C}{\mathcal{C}}
\newcommand{\KK}{\mathcal{K}}
\newcommand{\eps}{\varepsilon}
\newcommand{\conv}{\mathrm{conv}}
\newcommand{\mres}{\mathbin{\vrule height 1.6ex depth 0pt width 
0.13ex\vrule height 0.13ex depth 0pt width 1.3ex}}
\theoremstyle{plain}
\newtheorem{theorem}{Theorem}[section]
\newtheorem{lemma}[theorem]{Lemma}
\newtheorem*{theorem*}{Theorem}
\newtheorem*{corollary*}{Corollary}
\theoremstyle{definition}
\newtheorem{definition}[theorem]{Definition}
\newtheorem{remark}[theorem]{Remark}
\newtheorem{assumption}[theorem]{Assumption}
\newtheorem*{notation*}{Notation}
\numberwithin{equation}{section}
\numberwithin{figure}{section}
\title{Collapsing and the convex hull property \\ in a soap film capillarity model}
\author{Darren King, Francesco Maggi, and Salvatore Stuvard}
\address{Department of Mathematics, The University of Texas at Austin, 2515 Speedway, Stop C1200, Austin TX 78712-1202, USA}
\email{king@math.utexas.edu}
\email{maggi@math.utexas.edu}
\email{stuvard@math.utexas.edu}
\begin{document}

\begin{abstract} Soap films hanging from a wire frame are studied in the framework of capillarity theory. Minimizers in the corresponding variational problem are known to consist of positive volume regions with boundaries of constant mean curvature/pressure, possibly connected by ``collapsed'' minimal surfaces. We prove here that collapsing only occurs if the mean curvature/pressure of the bulky regions is negative, and that, when this last property holds, the whole soap film lies in the convex hull of its boundary wire frame.\\

\textsc{Keywords:} convex hull property, minimal surfaces, constant mean curvature surfaces, Plateau's problem.\\

\textsc{AMS Math Subject Classification (2010):} 49Q05 (primary), 53A10, 49Q20.
\end{abstract}

\maketitle

\section{Introduction} We continue the analysis, started in \cite{kms}, of the variational model for soap films spanning a wire frame introduced in \cite{maggiscardicchiostuvard}. In this {\bf soap film capillarity model}, soap films are described as three-dimensional regions of small volume, rather than as two-dimensional surfaces with vanishing mean curvature, i.e. as minimal surfaces. In \cite{kms} we have proved the existence of {\it generalized} minimizers in the soap film capillarity model. The term generalized indicates the possibility for minimizing sequences of three-dimensional regions to locally collapse onto two-dimensional surfaces. Correspondingly, a generalized minimizer consists: of a three-dimensional set enclosing the prescribed small volume of liquid, with boundary of constant mean curvature $\l$ -- where the value of $\l$ is proportional to the pressure of the soap film; and, possibly, of a two-dimensional surface with zero mean curvature, whose area has to be counted twice in computing the energy of the minimizer; see
\begin{figure}
  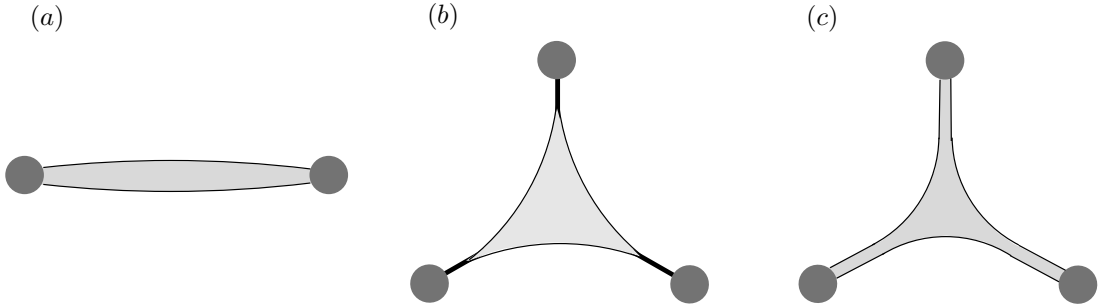
  \caption{{\small Generalized minimizers in the soap film capillarity model in the ``planar case'', where the ``boundary wire frame'' reduces to finitely many small disks (depicted in dark grey). We minimize the length of the boundary of two-dimensional regions, depicted in light gray, enclosing a given (small) volume $\e$ and spanning the boundary disks. (a) When the boundary consists of two disks, and $\e$ is small enough, we have a non-collapsed minimizing region bounded by two almost flat circular arcs of curvature $\l={\rm O}(\e)$. (b) When the boundary consists of three disks, and $\e$ is small enough, we have a collapsed minimizer given by a combination of a two-dimensional region bounded by circular arcs of negative curvature $\l=-{\rm O}(1/\sqrt{\e})$, and of three segments (depicted by thick lines) whose length has to be counted with double multiplicity to compute the minimizing energy. Collapsing corresponds with the situation, depicted in (c), where minimizing sequences consist of two-dimensional regions with opposite parts of their boundaries becoming increasingly closer to each other.}}
  \label{fig whatisthisabout}
\end{figure}
Figure \ref{fig whatisthisabout}. When this second possibility occurs, we speak of {\bf collapsed minimizers}. When collapsing does not occur, generalized minimizers are just regular minimizers, in the sense that they correspond to three-dimensional regions belonging to the  competition class. In this paper we prove two related results concerning important geometric properties of generalized minimizers, that can be roughly stated as follows:
\begin{enumerate}
  \item[(i)] if collapsing occurs, then the constant mean curvature/pressure $\l$ must be non-positive (Theorem \ref{thm lambda is negative});
  \item[(ii)] if $\l$ is non-positive, then the generalized minimizer is contained into the convex hull of the boundary wire frame (Theorem \ref{thm convex hull}); this convex hull property is of course a basic property of minimal surfaces, therefore the interest of establishing it in this setting.
\end{enumerate}
Theorem \ref{thm lambda is negative} is proved by comparing (through a technically delicate argument) a collapsed minimizer with competitors obtained by slightly de-collapsing its collapsed region (with a net increase of volume), followed by slightly deflating the bulky part of the minimizer (to restore the enclosed volume); see Figure \ref{fig openup} below. The proof of Theorem \ref{thm convex hull} is an adaptation to our context of the classical argument used to prove the convex hull property on stationary varifolds.

\medskip

The paper is organized as follows. In section \ref{section notation} we formally introduce the soap film capillarity model and state our main results (together with some necessary background results proved in \cite{kms}). Sections \ref{sec:proof1} and \ref{sec:proof2} contain, respectively, the proofs of  Theorem \ref{thm lambda is negative} and Theorem \ref{thm convex hull}. 

\medskip

\noindent{\bf Acknowledgments.} This work was supported by the NSF grants DMS-1565354, DMS-RTG-1840314, DMS-FRG-1854344, and DMS-2000034. S.S. acknowledges support from the AMS and the Simons Foundation through an AMS-Simons Travel Grant 2020.

\section{Statements}\label{section notation}

\subsection{Notation} The ambient space we will be working in is Euclidean space $\R^{n+1}$ with $n \geq 2$. For $A \subset \R^{n+1}$, $\cl(A)$ is the topological closure of $A$ in $\R^{n+1}$, $\conv(A)$ is its convex hull, and $I_\de(A), U_\de(A)$ are its closed and open $\de$-tubular neighborhoods, respectively. $B_r(x)$ is the open ball centered at $x \in \R^{n+1}$ with radius $r>0$. If $A$ is (Borel) measurable, $|A|$ and $\H^s(A)$ denote its Lebesgue and $s$-dimensional Hausdorff measure, respectively. We will adopt standard terminology in Geometric Measure Theory, for which we refer the reader to \cite{SimonLN,AFP,maggiBOOK}. In particular, given an integer $0 \leq k \leq n+1$, a Borel measurable set $M\subset\R^{n+1}$ is {\bf countably $k$-rectifiable} if it can be covered, up to an $\H^k$-negligible set, by countably many Lipschitz images of $\R^k$; it is {\bf (locally) $\H^k$-rectifiable} if it is countably $k$-rectifiable and, in addition, the $\H^k$ measure of $M$ is (locally) finite. A Borel set $E \subset \R^{n+1}$ is of {\bf locally finite perimeter} if there exists an $\R^{n+1}$-valued Radon measure $\mu_E$ on $\R^{n+1}$ such that $\langle \mu_E, X \rangle = \int_{E} \Div(X)\, dx$ for all vector fields $X \in C^1_c(\R^{n+1};\R^{n+1})$, and of {\bf finite perimeter} if $P(E) := |\mu_E|(\R^{n+1}) < \infty$. More generally, one can consider, for any Borel set $F \subset \R^{n+1}$, the quantity $|\mu_E|(F)$, which is called the {\bf relative perimeter} $P(E;F)$ of $E$ in $F$. The {\bf reduced boundary} of a set $E$ of finite perimeter is the set $\pa^*E$ of points $x \in {\rm spt} \, |\mu_E|$ such that $\left(|\mu_E|(B_r(x))\right)^{-1} \mu_E(B_r(x)) \to \nu_E(x)$ for some vector $\nu_E(x) \in \mathbb{S}^n$ as $r \to 0^+$. By De Giorgi's structure theorem, if $E$ has finite perimeter then $\pa^*E$ is $\H^n$-rectifiable, and the Gauss-Green measure $\mu_E$ and its total variation $|\mu_E|$ satisfy $\mu_E = \nu_E\, \H^n \mres \pa^*E$ and $|\mu_E| = \H^n \mres \pa^*E$, respectively.

\subsection{The soap film capillarity model}\label{problem}

Next, we recall the precise formulation of the variational problem introduced in \cite{kms}, and we outline the theory developed in there. We fix a compact set $W\subset\R^{n+1}$ (the ``wire frame''), and we denote the region accessible by the soap film as
\[
\Om:=\R^{n+1}\setminus W\,.
\]
The model scenario we have in mind is the physical case when $n+1 = 3$, and $W = I_\de(\Gamma)$ is the closed $\de$-neighborhood of a closed Jordan curve $\Gamma \subset \R^3$; nonetheless, admissible choice of $W$ will be more general than that. Following the Harrison-Pugh formulation of Plateau's problem \cite{harrisonpughACV,harrisonpughGENMETH} as extended in \cite{DLGM}, we introduce a {\bf spanning class} $\C$, that is, a non-empty family of smooth embeddings of $\SS^1$ into $\Om$ which is closed by homotopy in $\Om$, in the sense that if $\gamma \in \C$ and $\tilde \gamma$ is smooth and homotopically equivalent to $\gamma$ in $\Om$ \footnote{This means that there exists a continuous map $f \colon \left[0,1\right] \times \SS^1 \to \Om$ such that $f(0,\cdot) = \gamma$ and $f(1,\cdot) = \tilde \gamma$.} then $\tilde \gamma \in \C$. A set $S$ is {\bf $\C$-spanning $W$} if $S \cap \gamma \neq \emptyset$ for all $\gamma \in \C$. The (homotopic) {\bf Plateau's problem defined by $(W,\C)$} is then
\begin{equation}
  \label{plateau problem}
  \ell:=\inf\left\lbrace\H^n(S)\,:\,S\in\Sc\right\rbrace\,,
\end{equation}
where
\begin{equation} \label{class_S}
\Sc:=\left\lbrace S\subset\Om\,:\,\mbox{$S$ is relatively closed in $\Om$ and $S$ is $\C$-spanning $W$}\right\rbrace\,.
\end{equation}
The capillarity approximation \eqref{psi eps} of the Plateau's problem \eqref{plateau problem} has been studied in \cite{kms} under the following set of assumptions on $W$ and $\C$:
\begin{assumption} \label{ass:main}
The compact set $W$ and the spanning class $\C$ are such that the following holds:
\begin{enumerate}
\item[(A1)] Plateau's problem $\ell$ defined in \eqref{plateau problem} satisfies $\ell < \infty$; in particular, by \cite{harrisonpughACV,DLGM}, there exists a relatively compact, $\H^n$-rectifiable set $S \subset \Om$ such that $\H^n(S)=\ell$ \footnote{In addition, when $n=2$, every such minimizer $S$ is an Almgren-minimizer in $\Om$, and therefore satisfies Plateau's laws away from $W$ thanks to \cite{Almgren76,taylor76}. This result will not be needed in the sequel, but it is important because it establishes the physical relevance of the model.};
\item[(A2)] $\pa W = \pa \Om$ is a $C^2$-regular hypersurface in $\R^{n+1}$;
\item[(A3)] there exists $\tau_0>0$ such that, for every $\tau<\tau_0$, $\R^{n+1}\setminus I_{\tau}(W)$ is connected;
\item[(A4)] there exist $\eta_0>0$ and a minimizer $S$ in $\ell$ such that $\g\setminus I_{\eta_0}(S)\ne\emptyset$ for every $\g\in\C$.
\end{enumerate}
\end{assumption}

The conditions in Assumption \ref{ass:main} seem very reasonable towards the development of a theory of soap films, and are definitely valid in a reasonably large class of initial conditions. In fact, as a by-product of a technical result contained in the present paper, see Lemma \ref{lemma opening} below, one can see that all the results from \cite{kms} (and thus all the results of the present paper) still hold without the need of assuming (A4). This point is explained in detail in Section \ref{appendix remove A4} below.

\medskip

Next, we can define the capillarity problem $\psi(\eps)$ at volume $\eps>0$ as
\begin{equation}
  \label{psi eps}
  \psi(\e):=\inf\left\lbrace\H^n(\Om\cap\pa E)\,\colon\,\mbox{$E\in\E$, $|E|=\e$, $\Om\cap\pa E$ is $\C$-spanning $W$}\right\rbrace\,,
\end{equation}
where the competition class $\E$ is given by
\begin{equation}\label{class E}
  \E:=\left\lbrace E\subset\Om\,:\,\mbox{$E$ is an open set and $\pa E$ is $\H^n$-rectifiable}\right\rbrace\,.
\end{equation}
We explicitly observe that each $E \in \E$ is an open set of finite perimeter, and that $P(E;\Om) = \H^n(\Om \cap \pa^*E) \leq \H^n(\Om \cap \pa E)$. We also define the class
\begin{equation} \label{class K}
\begin{split}
  \KK:=
  \Big\{ (K,E)\,:\,&\mbox{$E\subset\Om$ is open with $\Om\cap\cl(\pa^*E)=\Om\cap\pa E\subset K$\,,}
  \\
  &\mbox{$K\in\Sc$ and $K$ is $\H^n$-rectifiable}\Big\}\,.
\end{split}
\end{equation}
For $(K,E) \in \KK$, its relaxed energy is given by
\begin{equation} \label{relaxed}
\F (K,E) := \H^n (\Om \cap \pa^*E) + 2\, \H^n (K \setminus \pa^* E)\,.
\end{equation}

We are now in the position to recall the main results from \cite{kms}, which lay the groundwork for the present analysis.

\begin{theorem}[Existence of generalized minimizers, see {\cite[Theorem 1.4]{kms}}]\label{thm lsc}
Let $W$ and $\C$ satisfy Assumption \ref{ass:main}, and let $\e > 0$. If $\{E_j\}_{j=1}^{\infty}$ is a minimizing sequence for $\psi(\e)$, then there exists a pair $(K,E)\in\KK$ with $|E|=\e$ such that, up to possibly extracting subsequences, and up to possible modifications of each $E_j$ outside a large ball containing $W$ (with both operations resulting in defining a new minimizing sequence for $\psi(\e)$, still denoted by $\{E_j\}_j$), we have that
  \begin{equation}\label{minimizing seq conv to gen minimiz}
    \begin{split}
    &\mbox{$E_j\to E$ in $L^1(\Om)$}\,,
    \\
    &\H^n\mres(\Om\cap\pa E_j)\weakstar \theta\,\H^n\mres K\qquad\mbox{as Radon measures in $\Om$}
    \end{split}
  \end{equation}
  as $j\to\infty$, for an upper semicontinuous multiplicity function $\theta:K\to\R$ satisfying
  \begin{equation}
    \label{theta density}
    \mbox{$\theta= 2$ $\H^n$-a.e. on $K\setminus\pa^*E$},\qquad\mbox{$\theta=1$ on $\Om\cap\pa^*E$}\,.
  \end{equation}
  Moreover, $\psi(\e)=\F(K,E)$ and, for a suitable constant $C$, $\psi(\e)\le 2\,\ell+C\,\e^{n/(n+1)}$.
\end{theorem}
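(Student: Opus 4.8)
The plan is to run the direct method in the calculus of variations; the two non-routine features are tracking the doubling of area along collapsing sequences and showing that the homotopic spanning condition survives the passage to the limit.

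\smallskip
\noindent\emph{Step 1 (confinement and compactness).} Given a minimizing sequence $\{E_j\}$ with $\H^n(\Om\cap\pa E_j)\to\psi(\e)$ and $|E_j|=\e$, I would first note that $W$ and every $\g\in\C$ lie in a fixed ball $B_{R_0}$. Excising from each $E_j$ the portion lying outside a ball $B_R$ with $B_{R_0}\cc B_R$ — at radii chosen via the coarea formula so as not to increase the energy asymptotically, and using that the excised volume is small by the isoperimetric inequality together with the constraint $|E_j|=\e$ — produces again a minimizing sequence, now contained in $B_R$ and still $\C$-spanning $W$ (the curves of $\C$ never leave $B_{R_0}$). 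The energy bound then forces $\sup_j P(E_j)<\infty$, so, up to a subsequence, $E_j\to E$ in $L^1(\R^{n+1})$ with $E$ of finite perimeter, $|E|=\e$, and $\H^n\mres(\Om\cap\pa E_j)\weakstar\mu$ for some Radon measure $\mu$ on $\Om$ with $\mu(\Om)\le\psi(\e)$; also $\H^n\mres(\Om\cap\pa^*E)\le\mu$ by lower semicontinuity of the perimeter.

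\smallskip
\noindent\emph{Step 2 (rectifiability and multiplicity).} Set $K:=\Om\cap\spt\mu$, which is relatively closed in $\Om$. The key point is a density lower bound $\Theta^n_*(\mu,x)\ge1$ for every $x\in K$: near a point where $\mu$ had small density one compares $E_j$ with a competitor obtained by filling or emptying a small ball, absorbing the resulting volume change through a cheap ${\rm O}(\e^{n/(n+1)})$ modification elsewhere, contradicting minimality. Together with $\Theta^{*n}(\mu,x)<\infty$ this makes $\mu$ $n$-rectifiable, $\mu=\theta\,\H^n\mres K$ with $K$ $\H^n$-rectifiable and $\theta\ge1$. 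The sharper bound $\theta\ge2$ on $K\setminus\pa^*E$ comes from a relative-isoperimetric and slicing argument: if $x\notin\pa^*E$ then $E$ has density $0$ or $1$ at $x$, so $|E_j\cap B_r(x)|$ (respectively $|B_r(x)\setminus E_j|$) is asymptotically negligible, which forces $\Om\cap\pa E_j$ to enter $B_r(x)$ essentially as the two faces of a thin slab around $K$, hence to carry area $\gtrsim 2\,\H^n(K\cap B_r(x))$; letting $j\to\infty$ and then $r\to0$ gives $\theta(x)\ge2$. Upper semicontinuity of $\theta$ is then routine from comparisons of $\mu(B_r(x_j))$ with $\mu(B_{r+|x-x_j|}(x))$.

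\smallskip
\noindent\emph{Step 3 (persistence of spanning) --- the main obstacle.} One must show $K\cap\g\ne\emptyset$ for every $\g\in\C$, knowing only that each $\Om\cap\pa E_j$ is $\C$-spanning. Suppose some $\g\in\C$ avoids the closed set $K$, so $\g\subset U_\rho(\g)\subset\Om\setminus K$ for small $\rho$ and $\mu(U_\rho(\g))=0$. Foliate the tube $U_\rho(\g)$ by smooth embeddings $\g_t$ of $\SS^1$, with $t$ ranging over an $n$-dimensional parameter set, all homotopic to $\g$ in $\Om$ and hence in $\C$; each $\g_t$ must meet $\Om\cap\pa E_j$. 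Slicing the sets $E_j$ along this (generic) foliation via the coarea formula, and using that $\g$ stays far from $\pa^*E$ while $E_j\to E$ in $L^1$, one extracts from the crossing points of $\pa E_j$ with the $\g_t$ a uniformly positive lower bound for $\H^n(\Om\cap\pa E_j\cap U_\rho(\g))$, contradicting $\mu(U_\rho(\g))=0$. This is the adaptation to the capillarity setting of the Harrison--Pugh / De Lellis--Ghiraldin--Maggi mechanism; the delicate part is arranging the slices so that ``nonempty intersection for all $t$'' genuinely yields ``positive $n$-dimensional area'', which is where transversality and genericity of the foliation enter.

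\smallskip
\noindent\emph{Step 4 (energy identity and the explicit bound).} After replacing $E$ by a suitable open representative for which $\Om\cap\cl(\pa^*E)=\Om\cap\pa E\subset K$, Steps 1--3 give $(K,E)\in\KK$ with $|E|=\e$ and $\Om\cap\pa E$ $\C$-spanning, and, using $\theta\ge1$ on $\pa^*E$, $\theta\ge2$ $\H^n$-a.e.\ on $K\setminus\pa^*E$, and weak-$*$ lower semicontinuity,
\[
\F(K,E)\ \le\ \int_K\theta\,d\H^n\ =\ \mu(\Om)\ \le\ \liminf_j\H^n(\Om\cap\pa E_j)\ =\ \psi(\e)\,.
\]
For the reverse inequality I would build a recovery sequence by slightly de-collapsing $K\setminus\pa^*E$ into a genuine thin open neighborhood glued to $E$: this yields $\tilde E_k\in\E$ with $|\tilde E_k|=\e$, $\Om\cap\pa\tilde E_k$ still $\C$-spanning, and $\H^n(\Om\cap\pa\tilde E_k)\to\F(K,E)$, whence $\psi(\e)\le\F(K,E)$. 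Equality then holds throughout, which in turn pins down $\theta=1$ $\H^n$-a.e.\ on $\Om\cap\pa^*E$ and $\theta=2$ $\H^n$-a.e.\ on $K\setminus\pa^*E$. Finally, $\psi(\e)\le2\,\ell+C\,\e^{n/(n+1)}$ follows from an explicit construction: starting from a Plateau minimizer $S$ (which exists by (A1), possibly after a preliminary polyhedral or $C^1$ approximation whose error is absorbed below), a thin tubular-type neighborhood of $S$ of negligible volume contributes boundary area $\to2\,\ell$, and a small bubble of volume comparable to $\e$ is attached to carry the prescribed volume, at an additional surface-area cost ${\rm O}(\e^{n/(n+1)})$, the spanning being provided throughout by the thickened copy of $S$.
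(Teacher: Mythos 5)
This theorem is not proved in the present paper: it is imported verbatim from \cite[Theorem 1.4]{kms}, so the only material here to compare against is the paper's discussion of that proof in Section \ref{appendix remove A4} and the de-collapsing construction of Lemma \ref{lemma opening}. At the level of strategy your outline (confinement, compactness, density lower bounds and rectifiability of the limit measure, multiplicity $\ge 2$ off $\pa^*E$, closure of the $\C$-spanning condition \`a la Harrison--Pugh/De Lellis--Ghiraldin--Maggi, then the energy identity and the explicit upper bound) is the correct skeleton of the argument in \cite{kms}. Steps 2 and 3 are, however, precisely where all the work lies, and as written they are declarations of intent rather than proofs; that is acceptable for a quoted theorem, but it means the proposal cannot be judged complete on its own.

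Two concrete points deserve correction. First, in Step 4 your construction for $\psi(\e)\le 2\,\ell+C\,\e^{n/(n+1)}$ takes a thin tubular neighborhood $F_j$ of a Plateau minimizer $S$ and asserts that ``the spanning is provided throughout by the thickened copy of $S$.'' This is false in general: a curve $\gamma\in\C$ could be entirely contained in the open neighborhood $F_j$, in which case it meets $S$ but \emph{not} $\Om\cap\pa F_j$, and $F_j$ is then inadmissible for $\psi(\e)$. This is exactly the role of assumption (A4) in \cite{kms} (every $\gamma$ must exit $I_{\eta_0}(S)$), and it is exactly the issue that Section \ref{appendix remove A4} of the paper resolves by replacing the two-sided neighborhood with the \emph{unilateral} neighborhood of Lemma \ref{lemma opening}, for which $S\subset\Om\cap\pa F$ by construction. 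Second, your recovery sequence for $\psi(\e)\le\F(K,E)$ ``de-collapses'' $K\setminus\pa^*E$; in this paper that is Lemma \ref{lemma opening}, which requires $K$ to be a smooth orientable hypersurface off a meager closed set. For the limit pair $(K,E)$ of a minimizing sequence, that regularity is obtained (via Theorem \ref{thm basic regularity}) only \emph{after} one knows $\F(K,E)=\psi(\e)$ and the minimality of $(K,E)$, so invoking it to prove $\psi(\e)\le\F(K,E)$ risks circularity. In \cite{kms} the identity $\F(K,E)=\psi(\e)$ is instead obtained by matching \emph{upper} bounds on the multiplicity $\theta$ (i.e., $\theta=1$ on $\Om\cap\pa^*E$ and $\theta=2$ a.e.\ on $K\setminus\pa^*E$, not merely $\theta\ge 1$ and $\theta\ge 2$) extracted from the minimizing sequence by comparison arguments; you should either follow that route or explain how to obtain the regularity of $K$ needed for the recovery construction without presupposing minimality.
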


\begin{definition} \label{def:gen_min}
A pair $(K,E) \in \KK$ with $|E| = \e$ is a {\bf generalized minimizer} for the capillarity problem $\psi(\e)$ if:
\begin{itemize}
\item[(a)] there exists a minimizing sequence $\{E_j\}_{j=1}^\infty$ of sets $E_j \in \E$ such that \eqref{minimizing seq conv to gen minimiz} holds for an upper semicontinuous function $\theta$ as in \eqref{theta density};
\item[(b)] $\F (K,E) = \psi(\e)$.
\end{itemize}
\end{definition}

\begin{theorem}[Euler-Lagrange equation for generalized minimizers, see {\cite[Theorem 1.6]{kms}}]\label{thm basic regularity}
 If $(K,E)$ is a generalized minimizer of $\psi(\e)$ and $f:\Om\to\Om$ is a diffeomorphism such that $|f(E)|=|E|$, then
 \begin{equation}
   \label{minimality KE against diffeos}
   \F(K,E)\le\F(f(K),f(E))\,.
 \end{equation}
 In particular:
 \begin{enumerate}
   \item[(i)] there exists $\l\in\R$ such that
  \begin{equation}
    \label{stationary main}
    \l\,\int_{\pa^*E}X\cdot\nu_E\,d\H^n=\int_{\pa^*E}\Div^K\,X\,d\H^n+2\,\int_{K\setminus\pa^*E}\Div^K\,X\,d\H^n
  \end{equation}
  for every $X\in C^1_{c}(\R^{n+1};\R^{n+1})$ with $X\cdot\nu_\Om=0$ on $\pa\Om$, where $\Div^K$ denotes the tangential divergence operator along $K$;
  \item[(ii)] there exists $\Sigma\subset K$, closed and with empty interior in $K$, such that $K\setminus\Sigma$ is a smooth hypersurface, $K\setminus(\Sigma\cup\pa E)$ is a smooth embedded minimal hypersurface, $\H^n(\Sigma\setminus\pa E)=0$, $\Om\cap(\pa E\setminus\pa^*E)\subset \Sigma$ has empty interior in $K$, and $\Om\cap\pa^*E$ is a smooth embedded hypersurface with constant scalar (w.r.t. $\nu_E$) mean curvature $\l$.
 \end{enumerate}
\end{theorem}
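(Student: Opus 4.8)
The statement comprises three layers, which I would establish in order. \emph{Minimality against volume-preserving diffeomorphisms.} Let $f\colon\Om\to\Om$ be a volume-preserving diffeomorphism; for the applications below $f$ may, and will, be taken isotopic to $\id_\Om$, so that $f(K)$ remains $\C$-spanning (here the homotopy-closedness of $\C$ is used) and hence $(f(K),f(E))\in\KK$ with $|f(E)|=\e$. I would then invoke the identity $\psi(\e)=\min\{\F(K',E')\colon (K',E')\in\KK,\ |E'|=\e\}$, whose compactness/lower-bound half is Theorem \ref{thm lsc} and whose upper bound is the ``thickening'' recovery-sequence construction from \cite{kms}: given $(K',E')$, thicken its collapsed part $K'\setminus\pa^*E'$ into a thin open tube, adjoin it to $E'$, restore the volume to $\e$ by a small volume-adjusting diffeomorphism supported in a fixed ball, and let the tube radius tend to $0$ --- the two sides of the tube produce the factor $2$ in \eqref{relaxed}, while $\C$-spanning is preserved as in \cite{DLGM,kms}. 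Applied to $(f(K),f(E))$ this yields $\psi(\e)\le\F(f(K),f(E))$, and since $\psi(\e)=\F(K,E)$ we conclude \eqref{minimality KE against diffeos}.

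\emph{The Lagrange multiplier (i).} Given $X\in C^1_c(\R^{n+1};\R^{n+1})$ with $X\cdot\nu_\Om=0$ on $\pa\Om$, its flow $\{\Phi^X_t\}$ is a family of diffeomorphisms of $\Om$ isotopic to $\id_\Om$; using $\pa^*(\Phi^X_tE)=\Phi^X_t(\pa^*E)$ and $\Phi^X_tK\setminus\pa^*(\Phi^X_tE)=\Phi^X_t(K\setminus\pa^*E)$, the map $t\mapsto\F(\Phi^X_tK,\Phi^X_tE)$ is differentiable at $0$, with derivative equal to the right-hand side of \eqref{stationary main} (call it $\de\F(X)$), while $t\mapsto|\Phi^X_tE|$ has derivative $\int_{\pa^*E}X\cdot\nu_E\,d\H^n$ at $0$. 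Fixing once and for all a vector field $X_0$ tangent to $\pa\Om$ with $\int_{\pa^*E}X_0\cdot\nu_E\,d\H^n=1$ --- possible since $P(E;\Om)>0$ --- and applying the implicit function theorem to the two-parameter family $(s,t)\mapsto\Phi^X_s\circ\Phi^{X_0}_{t}$, I would extract a one-parameter, volume-preserving subfamily of diffeomorphisms through $\id_\Om$; by \eqref{minimality KE against diffeos} its energy is critical at the origin, which forces $\de\F(X)=\l\int_{\pa^*E}X\cdot\nu_E\,d\H^n$ with $\l:=\de\F(X_0)$ independent of $X$, that is, \eqref{stationary main}.

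\emph{Regularity (ii): the main difficulty.} Equation \eqref{stationary main} says precisely that the integral varifold $V:=\mathbf{v}(\pa^*E,1)+\mathbf{v}(K\setminus\pa^*E,2)=\mathbf{v}(K,\theta)$ (with $\theta\in\{1,2\}$ by \eqref{theta density}) has generalized mean curvature $\vec H_V=-\l\,\nu_E$ on $\pa^*E$ and $\vec H_V=0$ on $K\setminus\pa^*E$; in particular $\|\vec H_V\|_{L^\infty(\|V\|)}\le|\l|$, so $V$ has locally bounded first variation in $\Om$ and the monotonicity formula applies. Since $K$ is $\H^n$-rectifiable, $\Theta(\|V\|,x)\in\{1,2\}$ for $\H^n$-a.e.\ $x\in K$. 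I would then run Allard's regularity theorem on $V$ and on $\tfrac12 V$: at any point where $\Theta(\|V\|,\cdot)<1+\e_0$, resp.\ $|\Theta(\|V\|,\cdot)-2|<\e_0$ (with $\e_0$ the Allard constant), $\spt\|V\|$ is near that point a $C^{1,\a}$ embedded hypersurface of multiplicity $1$, resp.\ $2$; in the first case it lies in $\Om\cap\pa^*E$, where it satisfies the constant-mean-curvature equation and is hence smooth by Schauder bootstrapping; in the second it lies in $K\setminus\pa E$ --- multiplicity $2$ forces $\pa^*E$ to be locally $\H^n$-null, hence, by the density lower bound for reduced boundaries, locally empty, so that $\vec H_V=0$ there --- hence is a $C^{1,\a}$, and therefore smooth, minimal hypersurface. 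Setting $\Sigma:=\Om\cap\spt\|V\|$ minus these two regular sets, $\Sigma$ is relatively closed; it has empty interior in $K$ because its complement is $\H^n$-full in $K$ and $K$ carries a measure absolutely continuous with respect to $\H^n$; $\Sigma\setminus\pa E\subset K\setminus\pa E$, where $\|V\|$ has density $2$ at $\H^n$-a.e.\ point, so $\H^n(\Sigma\setminus\pa E)=0$; and $\Om\cap(\pa E\setminus\pa^*E)\subset\Sigma$, since such points lie outside $\pa^*E$ and, were $\spt\|V\|$ a multiplicity-two sheet near them, $\pa^*E$ would be absent in a neighborhood, contradicting membership in $\cl(\pa^*E)=\Om\cap\pa E$. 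The genuinely delicate point --- the one I expect to be the main obstacle --- is to show that \emph{every} point of $\Om\cap\pa^*E$, not merely $\H^n$-a.e.\ one, is a multiplicity-one regular point, equivalently that the collapsed multiplicity-two portion $K\setminus\pa E$ does not accumulate on $\pa^*E$ with positive density; I would derive this from the one-sidedness of $E$ along its reduced boundary together with the strong maximum principle for stationary varifolds (alternatively, from a local almost-minimality property of $P(E;\cdot)$ inherited from the minimality of $(K,E)$).
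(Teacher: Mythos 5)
First, a framing remark: the paper you were given does not prove this statement. Theorem \ref{thm basic regularity} is imported verbatim from \cite[Theorem 1.6]{kms} as background (the only in-paper commentary is the remark that part (ii) is a consequence of \eqref{stationary main} plus Allard's theorem). So your proposal can only be compared with the strategy of \cite{kms}, and on that comparison there is a genuine gap in your first step.

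You derive \eqref{minimality KE against diffeos} from the identity $\psi(\e)=\min\{\F(K',E'):(K',E')\in\KK,\ |E'|=\e\}$, whose upper-bound half you justify by ``thickening the collapsed part of an arbitrary $(K',E')\in\KK$ into a thin tube.'' No such recovery sequence is available for a general element of $\KK$: there $K'$ is only $\H^n$-rectifiable and relatively closed, it admits no tubular neighborhood, and the perimeter of a $\de$-neighborhood of a general rectifiable set need not converge to twice its $\H^n$-measure. This is exactly why the present paper devotes Lemma \ref{lemma opening} to building such approximations, and that lemma requires $K$ to be a smooth \emph{orientable} hypersurface off a meager closed set --- a property which, for a generalized minimizer, is only known a posteriori through part (ii) of the very theorem you are proving. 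Your route is therefore circular: \eqref{minimality KE against diffeos} $\Rightarrow$ (i) $\Rightarrow$ (ii) $\Rightarrow$ regularity of $K$ $\Rightarrow$ recovery sequence $\Rightarrow$ \eqref{minimality KE against diffeos}. The way out, and the route taken in \cite{kms}, is to act with $f$ on the minimizing \emph{sequence} $E_j$ rather than on the limit pair: $f(E_j)\in\E$ is admissible after a small volume adjustment, its boundary is still $\C$-spanning by homotopy invariance, and $\limsup_j\H^n(\Om\cap\pa f(E_j))\le\F(f(K),f(E))$ follows from \eqref{minimizing seq conv to gen minimiz} upgraded to varifold convergence (weak-$*$ convergence of the area measures alone does not control the tangential Jacobian of $f$, which depends on the tangent planes). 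This gives $\psi(\e)\le\F(f(K),f(E))$ without ever needing a recovery sequence in $\KK$. Your step (i), by contrast, is the standard two-parameter first-variation argument and is fine once \eqref{minimality KE against diffeos} is in hand.

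In step (ii) there is a further misstep: ``running Allard on $\tfrac12 V$'' at a point where $|\Theta(\|V\|,\cdot)-2|<\e_0$ is not legitimate as stated, because $\tfrac12 V$ has density $\tfrac12$ on $\pa^*E$, so the hypothesis $\Theta\ge1$ on the support fails there; and the implication ``density close to $2$ forces $\pa^*E$ to be locally $\H^n$-null'' is unjustified (a multiplicity-one sheet of $\pa^*E$ touching a multiplicity-one minimal sheet also produces density $2$ at the contact point). The correct order is: first localize to a ball where the multiplicity is $2$ $\H^n$-a.e.\ (so that $V=2V'$ with $V'$ integral of multiplicity one and $\pa^*E$ absent by the lower density bound for reduced boundaries), and only then apply Allard to $V'$ at a point of small mass ratio; such points must be found via upper semicontinuity of the density, which is what yields ``closed with empty interior'' rather than everywhere-regularity. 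Finally, you correctly flag that the everywhere-smoothness of $\Om\cap\pa^*E$ (no accumulation of collapsed sheets onto the reduced boundary) is the delicate point, but a one-line appeal to the strong maximum principle does not settle it; this part genuinely requires the dedicated argument of \cite{kms}.
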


\begin{remark}

The conclusions about the regularity properties of the set $K$ achieved in Theorem \ref{thm basic regularity}(ii) are a straightforward consequence of the Euler-Lagrange equation \eqref{stationary main} and of Allard's regularity theorem for varifolds with bounded generalized mean curvature. A more refined analysis, which crucially relies on the structure of the variational problem $\psi(\e)$, was carried out in \cite{kms3}. A fundamental outcome is that, if one still denotes $\Sigma$ the singular set appearing in Theorem \ref{thm basic regularity}(ii), the set $\Sigma \setminus {\rm cl}(E)$ is \emph{empty} in all dimensions $n \le 6$ (thus, in particular, in the physical dimension $n=2$), so that $K \setminus {\rm cl}(E)$ is a smooth (in fact, analytic) stable minimal hypersurface of $\Omega \setminus {\rm cl}(E)$ in such cases; see \cite[Theorem 1.5]{kms3}.

\end{remark}

\subsection{Main results}\label{result} We start making precise the notion of collapsing.

\begin{definition} \label{def:collapsed}
A generalized minimizer $(K,E) \in \KK$ of $\psi(\e)$ is {\bf collapsed} if $K \setminus \pa E \neq \emptyset$. It is {\bf exteriorly collapsed} if $K \setminus \cl(E) \neq \emptyset$.
\end{definition}

\begin{theorem}[Convex hull property] \label{thm:main}
If $(K,E) \in \KK$ is an exteriorly collapsed generalized minimizer of $\psi(\e)$, then $K \subset \conv(W)$.
\end{theorem}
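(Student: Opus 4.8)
The plan is to adapt to this setting the classical argument for the convex hull property of stationary varifolds. Since $W$ is compact, $\conv(W)=\bigcap\{H:\ H\text{ a closed half-space},\ W\subset H\}$, so it suffices to show $K\subset H$ for an arbitrary closed half-space $H=\{x\in\R^{n+1}:\,e\cdot x\le s\}$ (with $e\in\SS^n$) containing $W$. Fix such an $H$, write $u(x):=e\cdot x$ and $U:=\{u>s\}$; then $U$ is open and disjoint from $W$, so $U\subset\Om$. Because $(K,E)$ is exteriorly collapsed, Theorem \ref{thm lambda is negative} gives $\l\le 0$, and this sign is what drives the argument. Throughout, $V$ denotes the integer rectifiable varifold supported on $K$ with weight $\mu_V:=\theta\,\H^n\mres K=\H^n\mres\pa^*E+2\,\H^n\mres(K\setminus\pa^*E)$, so that \eqref{stationary main} reads $\delta V(X)=\l\int_{\pa^*E}X\cdot\nu_E\,d\H^n$ for admissible $X$.

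The key step is to test \eqref{stationary main} with $X(x)=\chi(x)\,f'(u(x))\,e$, where $f(r):=(r-s)_+^3$ — so that $f,f'\in C^1$, $f''(r)=6\,(r-s)_+\ge 0$, with $f''>0$ exactly on $\{r>s\}$ — and $\chi\in C^1_c(\R^{n+1})$ equals $1$ on a ball containing $K\cup\cl(E)$. Since $W\subset\{u\le s\}$ and $f'\equiv 0$ on $(-\infty,s]$, the field $X$ vanishes on $W$ (in particular on $\pa\Om$) and is admissible. A direct computation gives $\Div^K X=f''(u)\,|P_{T_xK}e|^{2}$ for $\H^n$-a.e.\ $x\in K$ (with $P_{T_xK}$ the orthogonal projection onto the approximate tangent plane of $K$), while the Gauss--Green theorem on the set of finite perimeter $E$ rewrites the left-hand side of \eqref{stationary main} as $\l\int_E\Div(f'(u)e)\,dx=\l\int_E f''(u)\,dx$. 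Thus \eqref{stationary main} becomes
\begin{equation*}
\l\int_{E}f''(u)\,dx \;=\; \int_{K}f''(u)\,\big|P_{T_xK}e\big|^{2}\,\theta\,d\H^n\,.
\end{equation*}
The right-hand side is $\ge 0$ while the left-hand side is $\le 0$ (as $\l\le 0$, $f''\ge 0$), so both vanish. From the right-hand side we get $P_{T_xK}e=0$, i.e.\ $T_xK=e^{\perp}$, for $\mu_V$-a.e.\ $x\in K\cap U$; from the left-hand side, when $\l<0$ we get $\int_{E\cap U}(u-s)\,dx=0$, hence $|E\cap U|=0$ and — $E$ being open — $\pa^*E\cap U=\emptyset$.

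Now $V$ is stationary in $U$: for $X\in C^1_c(U;\R^{n+1})$ we have $\delta V(X)=\l\int_{\pa^*E\cap U}X\cdot\nu_E\,d\H^n$, which vanishes — either because $\l=0$, or, when $\l<0$, because $\pa^*E\cap U=\emptyset$ by the previous step. Suppose, for contradiction, $K\cap U\neq\emptyset$. For a unit vector $\xi\in e^{\perp}$ and $\phi\in C^1_c(U)$, testing stationarity with $X=\phi\,\xi$ and using $T_xK=e^{\perp}$ for $\mu_V$-a.e.\ $x\in K\cap U\supseteq \spt\phi\cap K$ — so that $\Div^K(\phi\,\xi)=\langle\xi,\nabla^{e^{\perp}}\phi\rangle=\partial_\xi\phi$ there — yields
\[
0=\delta V(\phi\,\xi)=\int_{K}\partial_\xi\phi\ d\mu_V\,.
\]
Hence the Radon measure $\mu_V\mres U$, of finite total mass $\mu_V(\R^{n+1})\le 2\,\H^n(K)<\infty$, has vanishing distributional derivative in every direction of $e^{\perp}$. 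Since $U$ is invariant under, and unbounded in, translations along $e^{\perp}$, the usual argument (translate a fixed nonnegative bump far along $e^{\perp}$ to produce arbitrarily many pairwise disjoint copies of equal mass, whose total mass is bounded by $\|\phi\|_\infty\,\mu_V(U)$, forcing that common mass to be $0$) gives $\mu_V\mres U=0$. As $\spt\mu_V=K$, this means $K\cap U=\emptyset$, a contradiction. Therefore $K\subset\{u\le s\}=H$, and letting $H$ vary over all closed half-spaces containing $W$ yields $K\subset\conv(W)$.

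The crux — and the only genuinely new point relative to the stationary-varifold case — is the second paragraph: extracting a \emph{definite sign} for the boundary term $\int_{\pa^*E}X\cdot\nu_E\,d\H^n$ in \eqref{stationary main} out of the bare inequality $\l\le 0$. This is achieved by the Gauss--Green trick, which turns that term into the bulk quantity $\l\int_E f''(u)\,dx$; it is exactly here that the hypothesis of exterior collapsing enters, through Theorem \ref{thm lambda is negative}, and it is what lets us treat the constant-mean-curvature portion of $K$ as if it were minimal inside the half-space $U$. The remaining ingredients — the computation of $\Div^K X$, the Gauss--Green formula for the finite-perimeter set $E$, and the translation argument excluding $K\cap U\neq\emptyset$ — are routine.
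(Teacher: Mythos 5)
Your proof is correct, and structurally it follows the paper's two-step strategy: Theorem \ref{thm lambda is negative} supplies $\l\le 0$, and a first-variation argument in the spirit of the convex hull property for stationary varifolds then yields $K\subset\conv(W)$. The execution of the second step, however, genuinely differs from the paper's Theorem \ref{thm convex hull}. There, the test field is $X=\chi\,\g(u)\,\nabla u$ with $u$ the distance to the convex set $Z=\conv(W\cap\cl(K))$; convexity of $u$ gives $\Div^K X\ge 0$, the divergence theorem on $E\setminus I_\eta(Z)$ gives the boundary term a sign, the case $\l=0$ is delegated to \cite[Theorem 19.2]{SimonLN}, and when $\l<0$ one obtains the stronger conclusion $K\subset\conv(W\cap\cl(K))$. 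You instead slice by supporting half-spaces, test with $X=\chi\,f'(e\cdot x)\,e$ for $f(r)=(r-s)_+^3$, and rule out mass above the hyperplane by the translation-invariance argument. This buys a self-contained proof (no appeal to Simon's theorem), a uniform treatment of $\l=0$ and $\l<0$, and an explicit bridge from ``$T_xK=e^\perp$ a.e.\ in $U$'' to ``$\H^n(K\cap U)=0$'' --- a step the paper's write-up of the analogous passage (``which now gives $\H^n(K\setminus I_\eta(Z))=0$'') leaves terse; the price is that you do not recover the sharper statement $K\subset\conv(W\cap\cl(K))$ in the case $\l<0$, which is not needed for Theorem \ref{thm:main}. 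Two small points should be made airtight: the identification $\spt(\H^n\mres K)=K$, used to pass from $\H^n(K\cap U)=0$ to $K\cap U=\emptyset$, rests on the lower density estimate $\H^n(K\cap B_r(x))\ge c_0\,r^n$ from \cite[Formula (3.24)]{kms}, exactly as invoked in Section \ref{sec:proof1}; and the Gauss--Green step tacitly uses that $E$ is a bounded set of finite perimeter in all of $\R^{n+1}$, which follows from $\H^n(\pa E)<\infty$ via Federer's criterion.
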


\begin{remark} \label{rmk:exterior collapsing}
Theorem \ref{thm:main} can be regarded as an extension to the capillarity model of the classical convex hull property valid in the context of (generalized) minimal surfaces. It is worth noticing that the assumption of exterior collapsing is necessary in this setting. It is easy to construct examples of non-collapsed minimizers of $\psi(\e)$ for which the convex hull property fails: for instance, in the situation of Figure \ref{fig whatisthisabout}(a), as soon as the volume parameter $\e$ is slightly increased, it is clear that part of the corresponding minimizer lies outside of the convex hull of the boundary data.
\end{remark}

Theorem \ref{thm:main} will be proved in two steps, which are of independent interest, and for this reason we record them in two separate statements. First, we show that exterior collapsing enforces a sign condition on the multiplier $\l$ appearing in the Euler--Lagrange equation \eqref{stationary main}. Then, we establish the validity of the convex hull property for a solution to \eqref{stationary main} in the regime $\l \leq 0$.

\begin{theorem}\label{thm lambda is negative}
Let $(K,E) \in \KK$ be an exteriorly collapsed generalized minimizer of $\psi(\e)$. Then, the Lagrange multiplier $\l$ in the Euler-Lagrange equation \eqref{stationary main} satisfies $\l \leq 0$.
\end{theorem}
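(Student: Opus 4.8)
The plan is to contradict the minimality identity $\psi(\e)=\F(K,E)$ by exhibiting, for each small $t>0$, a pair $(\wi K_t,\wi E_t)\in\KK$ with $|\wi E_t|=\e$ and
\[
\F(\wi K_t,\wi E_t)\le\F(K,E)-\l\,v(t)+o(v(t))\qquad(t\to0^+)\,,
\]
for some $v(t)>0$ with $v(t)\to0$. Once the density of $\E$ in $\KK$ with respect to $\F$ at fixed volume is invoked — itself a consequence of the opening construction of Lemma~\ref{lemma opening} applied to the residual collapsed part of $\wi K_t$ — one gets $\psi(\e)\le\F(\wi K_t,\wi E_t)$, hence $0\le-\l\,v(t)+o(v(t))$; dividing by $v(t)$ and letting $t\to0^+$ forces $\l\le0$. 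The pair $(\wi K_t,\wi E_t)$ is assembled from two independent moves, cheap in complementary ways: first I de-collapse a small disk of $K\setminus\cl(E)$, which is area-neutral to second order but raises the enclosed volume by $v(t)\sim t$; then I deflate the bulky region $\pa^*E$ to restore the volume, which by the Euler--Lagrange equation~\eqref{stationary main} changes the area by $-\l\,v(t)+o(v(t))$.

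\emph{De-collapsing.} Exterior collapsing means $K_0:=K\setminus\cl(E)\ne\emptyset$; since $K\setminus\Sigma$ is a smooth embedded minimal hypersurface and $\H^n(\Sigma\cap K_0)\le\H^n(\Sigma\setminus\pa E)=0$ by Theorem~\ref{thm basic regularity}(ii), I may fix a regular point $p\in K_0$ and a small disk $D\subset K_0$ around $p$ with $\cl(D)$ a compact subset of $\Om\setminus\cl(E)$ disjoint from $\Sigma$. Using Fermi coordinates $(y,s)$ along $D$ in a tubular neighborhood $U\subset\Om\setminus\cl(E)$, and a fixed $\phi\in C^\infty(D)$ with $\phi>0$ in the interior of $D$ and $\phi=0$ on $\pa D$, set $E_t:=E\cup\{(y,s):y\in D,\ |s|<t\,\phi(y)\}$. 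Lemma~\ref{lemma opening} then produces $(K_t,E_t)\in\KK$, with $K_t=(K\setminus D)\cup\{s=t\phi(y)\}\cup\{s=-t\phi(y)\}$, such that $|E_t|=\e+v(t)$ with $v(t)=2t\int_D\phi\,d\H^n+O(t^2)>0$, and $\F(K_t,E_t)=\F(K,E)+O(t^2)$: the area identity holds because each of the two graphs $\{s=\pm t\phi\}$ is a graph of $C^1$-norm $O(t)$ over the minimal disk $D$, hence of area $\H^n(D)+O(t^2)$, so the two of them together replace the doubly counted term $2\,\H^n(D)$ in $\F$ by $2\,\H^n(D)+O(t^2)$. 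The substantive content of Lemma~\ref{lemma opening}, and the hard part of the whole proof, is that $K_t$ still $\C$-spans $W$: a curve $\gamma\in\C$ meeting $K$ but not $K\setminus D$ must enter $U$ to reach the interior of $D$, and on the way it is forced to cross one of the two graphs $\{s=\pm t\phi\}$.

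\emph{Deflating the bulk.} Since $E$ is a non-empty open set with $|E|=\e<\infty$, one has $\H^n(\Om\cap\pa^*E)=P(E;\Om)>0$, so there is a regular point $q$ of $\pa^*E$ in $\Om$, which I take away from $\pa\Om$ and from $U$; near $q$, $\pa^*E$ is a smooth hypersurface of constant mean curvature $\l$. Choose $X\in C^1_c(\Om;\R^{n+1})$ with $\spt X$ a small ball about $q$, so that $\spt X\cap U=\emptyset$, and $a:=-\int_{\pa^*E}X\cdot\nu_E\,d\H^n>0$; let $\{f_s\}$ be the flow of $X$, a smooth family of diffeomorphisms of $\Om$, each isotopic to the identity and equal to the identity off $\spt X$. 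Because $(K_t,E_t)$ and $(K,E)$ coincide on $\spt X$ while $f_s$ acts only there, $\F(f_s(K_t),f_s(E_t))=\F(f_s(K),f_s(E))+O(t^2)$ uniformly in $s$; moreover $\tfrac{d}{ds}\big|_{s=0}|f_s(E_t)|=\int_{\pa^*E}X\cdot\nu_E=-a$, while \eqref{stationary main} gives
\[
\tfrac{d}{ds}\Big|_{s=0}\F\big(f_s(K),f_s(E)\big)=\l\int_{\pa^*E}X\cdot\nu_E=-\l\,a\,.
\]
Both functions of $s$ being smooth, the implicit function theorem provides $s(t)=v(t)/a+O(v(t)^2)$ with $|f_{s(t)}(E_t)|=\e$, and a Taylor expansion yields
\[
\F\big(f_{s(t)}(K_t),f_{s(t)}(E_t)\big)=\F(K,E)-\l\,a\,s(t)+O(s(t)^2)+O(t^2)=\F(K,E)-\l\,v(t)+o(v(t))\,,
\]
using $v(t)^2,\ t^2=o(v(t))$. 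Finally $(\wi K_t,\wi E_t):=(f_{s(t)}(K_t),f_{s(t)}(E_t))\in\KK$ — membership, and the $\C$-spanning of $\wi K_t$, surviving because diffeomorphisms of $\Om$ isotopic to the identity carry $\KK$ into itself and preserve the spanning class — so $(\wi K_t,\wi E_t)$ is the pair required in the first paragraph, and $\l\le0$ follows as explained there.

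In summary, the only routine ingredient is the deflation step, a standard first-variation computation based on the Euler--Lagrange equation~\eqref{stationary main}; the technical heart, which I expect to be the main obstacle and which is isolated in Lemma~\ref{lemma opening}, is the de-collapsing construction — engineering the competitor so as to control the area to second order while preserving the homotopic spanning condition — together with the density of $\E$ in $\KK$ that it underlies and that closes the argument.
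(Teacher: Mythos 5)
Your proposal follows essentially the same route as the paper's proof: de-collapse near a regular point of $K\setminus\cl(E)$ at area cost ${\rm O}(t^2)$ (using minimality of that sheet), restore the enclosed volume by a deformation of $\pa^*E$ whose cost is $-\l\,t+{\rm o}(t)$ by the Euler--Lagrange equation, and then compare the competitor with $\psi(\e)$ via the opening construction of Lemma \ref{lemma opening}; your flow-plus-implicit-function-theorem version of the volume restoration is an acceptable substitute for the paper's explicit graph deformation. Two caveats are worth recording: first, the paper bulges the collapsed sheet on \emph{one} side only, so that $K$ minus a neighborhood of the deflation point stays inside the competitor and spanning there is automatic, whereas your symmetric lens deletes the disk $D$ from $K_t$ and your spanning argument omits the case of a curve $\gamma\in\C$ contained entirely in the open lens (this case must be excluded separately, e.g.\ by noting that such a $\gamma$ would be null-homotopic in $\Om$, so $\C$ would contain small circles disjoint from $K$, contradicting that $K$ is $\C$-spanning); second, the approximation by sets of $\E$ is not ``at fixed volume'' --- Lemma \ref{lemma opening} only yields $F_j$ with $|F_j|\to\e$ --- so the lower semicontinuity of $\psi$ on $(0,\infty)$ is genuinely needed to close the chain $\psi(\e)\le\liminf_j\psi(|F_j|)\le\limsup_j\H^n(\Om\cap\pa F_j)\le\F(K_t,E_t)$.
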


\begin{theorem} \label{thm convex hull}
Suppose that a pair $(K,E) \in \KK$ satisfies the identity \eqref{stationary main} with $\l \leq 0$. Then, $K$ is contained in the convex hull $\conv(W)$. Moreover, if $\l < 0$, then $K \subset \conv(W \cap \cl(K))$.
\end{theorem}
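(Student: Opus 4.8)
\emph{Plan of proof.} The plan is to adapt the classical convex‑hull argument for stationary varifolds. Rephrase \eqref{stationary main} varifold‑theoretically: let $V$ be the rectifiable $n$‑varifold supported on $K$ with weight $\|V\|=\H^n\mres(\Om\cap\pa^*E)+2\,\H^n\mres(K\setminus\pa^*E)=\vartheta\,\H^n\mres K$, $\vartheta\in\{1,2\}$; then \eqref{stationary main} says precisely that the first variation of $V$ equals $\l\int_{\pa^*E}X\cdot\nu_E\,d\H^n$ along every \emph{admissible} field $X$ (i.e.\ $X\in C^1_c(\R^{n+1};\R^{n+1})$ with $X\cdot\nu_\Om=0$ on $\pa\Om$). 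Since $\vartheta\ge1$ on $K$, it suffices to prove $\spt\|V\|\subset\conv(W)$, and $\spt\|V\|\subset\conv(W\cap\cl(K))$ when $\l<0$. We may assume $\cl(K)$ and $\cl(E)$ bounded (true for the pairs of Theorem \ref{thm lsc}). Suppose, for a contradiction, that $x_0\in\spt\|V\|$ lies outside $\conv(W)$ (resp.\ outside $\conv(W\cap\cl(K))$). By separation there are a unit vector $e$ and $c\in\R$ with $h(x_0)>0$ and $h<0$ on $W$ (resp.\ on $W\cap\cl(K)$), where $h(x):=e\cdot x-c$. In the first case $\{h>0\}\subset\Om$; in the second, since $\cl(K)\cap W=\cl(K)\setminus K\subset\pa\Om$, the compact set $W\cap\{h\ge0\}$ is disjoint from $\cl(K)$.

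Pick $\phi\in C^\infty(\R)$ with $\phi\equiv0$ on $(-\infty,0]$ and $\phi'>0$ on $(0,\infty)$, and $\chi\in C^\infty_c(\R^{n+1})$ with $\chi\equiv1$ near $\cl(K)$ and $\spt\chi$ disjoint from $W\cap\{h\ge0\}$. Then $X:=\chi\,\phi(h)\,e$ is admissible, since $\pa\Om\cap\spt\chi\subset\{h<0\}$, where $\phi(h)=0$. On $K$ (where $\chi\equiv1$) one computes $\Div^K X=\phi'(h)\,|\nabla^K h|^2\ge0$, with $\nabla^K h$ the tangential gradient of $h$; and, $\phi(h)\,e$ being a gradient field with divergence $\phi'(h)$, the divergence theorem for sets of finite perimeter (through De Giorgi's structure theorem, after a cutoff equal to $\phi(h)\,e$ on $\cl(E)$) gives $\int_{\pa^*E}X\cdot\nu_E\,d\H^n=\int_E\phi'(h)\,dx$. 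Hence \eqref{stationary main} becomes
\[
0\ \le\ \int_{\pa^*E}\phi'(h)\,|\nabla^K h|^2\,d\H^n+2\int_{K\setminus\pa^*E}\phi'(h)\,|\nabla^K h|^2\,d\H^n\ =\ \l\int_E\phi'(h)\,dx\ \le\ 0\,,
\]
using $\phi'\ge0$, $|E|<\infty$, $\l\le0$; so both sides vanish. Thus $\nabla^K h=0$ $\H^n$‑a.e.\ on $K\cap\{h>0\}$, and, if $\l<0$, also $|E\cap\{h>0\}|=0$, whence $\pa^*E\cap\{h>0\}=\emptyset$ (a reduced‑boundary point cannot have $E$‑density $\tfrac12$ inside an open set where $E$ is Lebesgue‑null).

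On $\{h>0\}$ the varifold $V$ therefore has $\H^n$‑a.e.\ tangent plane parallel to $\{h=0\}$, and near its support there it has vanishing first variation (the right‑hand side of \eqref{stationary main} is $0$: trivially if $\l=0$, and because $\pa^*E\cap\{h>0\}=\emptyset$ if $\l<0$); its support is compact and contained in $\{0\le h\le M\}$, $M=\max_{\spt\|V\|}h>0$. Now the classical mechanism behind the convex hull property applies: a stationary rectifiable $n$‑varifold with multiplicity $\ge1$, tangent planes $\H^n$‑a.e.\ parallel to a hyperplane, and compact support inside a slab must be the zero varifold. (Sketch: monotonicity gives density $\ge1$ on the support; blowing up at a point $x_*$ where $h$ is maximal, the tangent cone is a horizontal plane with integer multiplicity, so by Allard's theorem $V$ is near $x_*$ a smooth multiple sheet tangent to, hence a piece of, $\{h=M\}$; by unique continuation this flat sheet extends, and having no boundary in $\{h>0\}$ it cannot terminate, so it is all of $\{h=M\}$ — impossible. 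Alternatively: slice along $h$ and use the constancy theorem fibrewise.) Hence $\|V\|(\{h>0\})=0$, contradicting $x_0\in\spt\|V\|$. The step I expect to be the main obstacle is exactly this last one — turning ``a.e.\ tangent planes parallel to a hyperplane'' into vanishing of the upper cap — which is where the classical varifold argument is genuinely used, and where one must be sure the structural tools (monotonicity, Allard, constancy; or the partial regularity of $K$ from Theorem \ref{thm basic regularity}(ii), valid whenever \eqref{stationary main} holds) apply to $V$ in spite of singularities of its support. A secondary point, relevant only to the $\l<0$ refinement, is keeping test fields admissible near parts of $W$ with $h\ge0$: this uses $\Om\cap\pa^*E\subset K$ together with $\nu_E=\nu_\Om$ on $\pa^*E\cap\pa\Om$, and it is here that the strict sign $\l<0$ is actually needed.
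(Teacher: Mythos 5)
Your route is sound and, in its second half, genuinely different from the paper's; both are descendants of the classical stationary-varifold argument. The paper splits into cases: for $\l=0$ it invokes \cite[Theorem 19.2]{SimonLN} as a black box, while for $\l<0$ it tests \eqref{stationary main} with $X=\chi\,\g(u)\nabla u$, $u=\dist(\cdot,Z)$, $Z=\conv(W\cap\cl(K))$, applies the divergence theorem on $E\setminus I_\eta(Z)$ (so every term acquires a sign without ever differentiating a cutoff over $E$), concludes first $E\subset Z$ and only then kills $K\setminus Z$. Your separating-hyperplane version reaches the analogous intermediate conclusions ($\nabla^K h=0$ $\H^n$-a.e.\ on $K\cap\{h>0\}$, and $|E\cap\{h>0\}|=0$ when $\l<0$) with linear test fields, which is cleaner and treats both signs of $\l$ uniformly, but it must then pay by proving the ``slab'' claim that the paper either outsources to \cite{SimonLN} or absorbs into the convexity of $u$. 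Two smaller points: in the $\l<0$ case the identity $\int_{\pa^*E}X\cdot\nu_E=\int_E\phi'(h)$ requires two \emph{different} cutoffs --- the admissibility cutoff $\chi$ vanishing near $W\cap\{h\ge0\}$, and a second one equal to $1$ on all of $\cl(E)$ inside the divergence theorem --- reconciled via $\Om\cap\pa^*E\subset K$ (where $\chi=1$) and $X\cdot\nu_E=0$ on $\pa^*E\cap\pa\Om$; you gesture at this, but it deserves a line, since $\cl(E)$ may well meet $W\cap\{h\ge 0\}$. Also, passing from $\H^n(K\cap\{h>0\})=0$ to $K\cap\{h>0\}=\emptyset$ uses the lower density bound for $\H^n\mres K$.

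The one genuine gap as written is the slab step, and of your two sketches only the second survives. The Allard/unique-continuation route fails: the multiplicity equals $2$ on $K\setminus\pa^*E$, so the blow-up at a highest point may be a multiplicity-two plane, which is outside the regime of Allard's theorem (density $\ge1$ is not enough), and there is no unique continuation principle for stationary integral varifolds with singularities. The slicing/constancy route does close the argument and should be written out. For horizontal $\tau\in e^\perp$ and $\zeta\in C^1_c(\{h>0\})$ (in the $\l<0$ case, further cut off near $W\cap\{h\ge0\}$, which is legitimate because $\cl(K)\cap\{h\ge0\}$ is compact and disjoint from $W$), the field $\zeta\tau$ is admissible and its first variation vanishes --- trivially if $\l=0$, and because $\pa^*E\subset\{h\le0\}$ if $\l<0$ --- while $\Div^K(\zeta\tau)=\nabla\zeta\cdot P_{T_xK}\tau=\pa_\tau\zeta$ $\H^n$-a.e.\ on $K\cap\{h>0\}$, since the tangent planes there are a.e.\ equal to $e^\perp$. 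Hence $\int\pa_\tau\zeta\,d\|V\|=0$: all horizontal distributional derivatives of $\|V\|\mres\{h>0\}$ vanish. Taking $\zeta=a(y)b(s)$ in coordinates $(y,s)\in e^\perp\times(0,\infty)$, each marginal $\mu_b(A)=\int_{A\times\R}b(h)\,d\|V\|$ is a positive finite measure on $e^\perp\cong\R^n$ with vanishing distributional gradient, hence a constant multiple of Lebesgue measure, hence zero; so $\|V\|(\{h>0\})=0$, the desired contradiction. Note that ``tangent planes a.e.\ horizontal'' alone would not suffice (a rectifiable set can have a.e.\ horizontal tangents without lying in countably many horizontal hyperplanes); it is the combination with stationarity that does the work.
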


Theorem \ref{thm:main} is then an immediate corollary of Theorems \ref{thm lambda is negative} and \ref{thm convex hull}. Observe that the validity of the strict inequality $\l < 0$ produces a stronger version of the convex hull property compared to the classical result for minimal surfaces. The proof of Theorem \ref{thm convex hull} is obtained by adapting the argument typically used to establish the convex hull property for stationary varifolds (roughly, the case $\l = 0$ of Theorem \ref{thm convex hull}), see \cite[Theorem 19.2]{SimonLN}. Proving Theorem \ref{thm lambda is negative} is more challenging, and is based on the following geometric idea. Given an exteriorly collapsed generalized minimizer $(K,E)$, we define a one-parameter family of competitors $\{(K_t,E_t)\}_{t > 0}$ with $(K_t,E_t) \in \KK$ and $|E_t| = \e$ by first adding some positive volume $t$ near a point in the collapsed region $K \setminus \cl(E)$, and then restoring the volume constraint by ``locally pushing inwards'' $E$ at a point in $\pa^*E$; see Figure \ref{fig openup} below. Since $K \setminus \cl(E)$ and $\pa^*E$ have, respectively, $0$ and $\l$ mean curvature, we find $\F(K_t,E_t)=\F(K,E)-\l\,t+{\rm O}(t^2)$, so that $\l\le 0$ follows by letting $t\to 0^+$, \emph{provided} we can show that $\F(K,E)\le\F(K_t,E_t)$. This inequality requires a dedicated argument. Indeed, we only know that $(K,E)$ minimizes the relaxed energy $\F$ with respect to its diffeomorphic images, and in fact $K_t$ cannot be represented as the image of $K$ through a map, let alone through a diffeomorphism. To prove $\F(K,E)\le\F(K_t,E_t)$, we will instead approximate $(K_t,E_t)$ by a sequence of open sets $F_j$ in $\E$ having volumes $|F_j|$ converging to $\e$ as $j \to \infty$. Since $\F(K,E) = \psi(\e)$, and $\psi(\cdot)$ is lower semicontinuous on $\left( 0, \infty \right)$, we will obtain the desired inequality if we are able to enforce that the $\H^n$ measure of the boundaries $\pa F_j$ in $\Om$ is not larger than $\F(K_t,E_t)$ for large $j$. This construction is the main technical difficulty of this note, and it exploits in a crucial way the regularity properties of $K$ as described in Theorem \ref{thm basic regularity}. The details are discussed in Lemma \ref{lemma opening}.

\section{Proof of Theorem \ref{thm lambda is negative} } \label{sec:proof1} We start with a simple lemma on orientability, which allows to strengthen conclusion (ii) in Theorem \ref{thm basic regularity} from ``there exists $\Sigma\subset K$, closed and with empty interior in $K$, such that $K\setminus\Sigma$ is a smooth hypersurface'' into ``there exists $\Sigma\subset K$, closed and with empty interior in $K$, such that $K\setminus\Sigma$ is a smooth {\bf orientable} hypersurface''. We do not claim that the set $\Sigma$ resulting from this change still satisfies $\H^n(\Sigma\setminus\pa E)=0$.

\begin{lemma}\label{lemma orientabile}
  If $M$ is a smooth hypersurface in $\R^{n+1}$, then there exists a meager closed set $J\subset M$ such that a smooth unit normal vector field to $M$ can be defined on $M\setminus J$.
\end{lemma}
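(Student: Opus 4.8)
The statement asserts that any smooth hypersurface $M\subset\R^{n+1}$ admits a smooth unit normal field after removing a meager (nowhere dense, since it is also closed) subset $J$. The only obstruction to globally defining a unit normal is non-orientability of $M$, so the plan is to localize the problem: decompose $M$ into countably many orientable pieces, and then argue that the ``overlap ambiguities'' can be confined to a meager set.

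\textbf{First step: local orientability and a countable cover.} Every point $p\in M$ has a neighborhood $U$ in $M$ that is diffeomorphic to an open subset of $\R^n$, hence orientable, so a smooth unit normal $\nu_U$ exists on $U$. Since $M$ is second countable, cover $M$ by countably many such sets $\{U_i\}_{i\in\N}$, each carrying a choice of smooth unit normal $\nu_i$ on $U_i$. The idea is to build $\nu$ on $M$ inductively, using $\nu_1$ on $U_1$, then extending to $U_2$ by matching signs on the (possibly disconnected) overlap $U_1\cap U_2$, and so on. The problem is that on a connected component of an overlap where the two local normals disagree in sign, there is no way to reconcile them; such a component must be cut out. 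Concretely, I would set $J$ to be the union, over all pairs $i<j$, of the topological boundaries (relative to $M$) of the sets where the running global choice and $\nu_j$ agree; equivalently, $J$ is built from the frontiers of the ``disagreement'' loci. A cleaner way to organize this: define an equivalence relation on $M$ by declaring $p\sim q$ if they can be joined by a path along which a continuous choice of unit normal can be propagated consistently — but the quickest route is the inductive/Zorn-type gluing below.

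\textbf{Second step: the gluing and the meager set.} Process the cover $\{U_i\}$ one index at a time. Having defined a smooth unit normal $\nu$ on an open set $V_{i-1}\subset M$ (with $V_0=\emptyset$, $V_1=U_1$), consider $U_i$. On the open set $V_{i-1}\cap U_i$, the function $\sigma_i:=\nu\cdot\nu_i\in\{+1,-1\}$ is locally constant, hence constant on each connected component. Let $A_i$ be the union of those components of $V_{i-1}\cap U_i$ on which $\sigma_i=+1$, and $B_i$ the union of those on which $\sigma_i=-1$. One cannot in general consistently extend $\nu$ to all of $U_i$; instead one extends over the open set $U_i\setminus\cl(B_i)$ if, say, $A_i$ is ``dominant'', but a symmetric choice must be made component-by-component of $U_i$. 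I would handle this by working with the connected components of $U_i$ separately (there are at most countably many), and for each such component $C$: if $C\cap V_{i-1}$ lies entirely in $A_i\cup B_i$ with a single sign pattern, extend $\nu$ over $C$ after multiplying $\nu_i|_C$ by that sign; otherwise (the sign is not constant on $C\cap V_{i-1}$, or $C$ meets both $A_i$ and $B_i$), we must discard part of $C$ — specifically we can only keep the connected component of $C\setminus(\text{frontier in }C\text{ of }A_i)$ containing whichever piece we commit to, and the discarded part contributes its (closed, nowhere dense in $M$) frontier to $J$. Set $V_i$ to be the resulting enlarged open set on which $\nu$ is defined. Finally put $J:=M\setminus\bigcup_i V_i$, or rather $J:=\cl(M\setminus\bigcup_i V_i)$ taken relative to $M$; one checks $M\setminus J=\bigcup_i V_i$ is open and dense in $M$, so $J$ is closed and has empty interior, i.e.\ is meager, and $\nu$ is a well-defined smooth unit normal on $M\setminus J$ by construction (smoothness is inherited from the $\nu_i$, and the global definition is consistent because at each stage we only extended across components where the sign matched).

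\textbf{Main obstacle.} The technical heart is bookkeeping the inductive gluing so that (a) the extension is genuinely consistent (no point ever receives two conflicting values) and (b) the accumulated discarded set $J$ is nowhere dense in $M$ rather than merely meager — one must argue that at each of the countably many steps only a closed set with empty interior is removed, and that a countable union of such frontiers, together with the residual $M\setminus\bigcup_iV_i$, still has empty interior. The point (b) is where a little care is needed: it is not automatic that a countable union of nowhere dense closed sets is closed, so I would instead directly verify that $\bigcup_i V_i$ is dense in $M$ — indeed, given any $p\in M$ and any small orientable neighborhood $U\ni p$, once $U$ is ``processed'' (i.e.\ once $i$ with $U\subset U_i$, or at least $U\cap U_i\neq\emptyset$ with $p$ in a full-sign component, is reached) a neighborhood of $p$ lands in some $V_i$ — so its complement $J$ has empty interior, giving meagerness directly without appealing to the Baire-category closure subtleties. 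An alternative, perhaps slicker, proof replaces the inductive gluing with the double cover construction: let $\pi:\wi M\to M$ be the orientation double cover, which is a connected (if $M$ is non-orientable) or disconnected smooth $(n+1)$-manifold-with... no, it is an $n$-manifold; $\wi M$ is orientable, carries a canonical unit normal (pulled back), and $M$ is obtained by a free $\Z/2$ action; one then selects a fundamental domain whose boundary is meager. I would mention this as the conceptual reason the statement is true, but carry out the explicit cover-and-glue argument above since it is elementary and self-contained.
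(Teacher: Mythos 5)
There is a genuine gap, and it sits exactly where you flagged the ``technical heart'': the inconsistent-sign case of the gluing, and hence the density of $\bigcup_i V_i$. In that case you propose to keep ``the connected component of $C\setminus(\text{frontier in }C\text{ of }A_i)$ containing whichever piece we commit to.'' But $A_i$ is open and closed in $C\setminus\mathrm{fr}(A_i)$ (its complement there is $C\setminus\cl(A_i)$, also open), so every component of $C\setminus\mathrm{fr}(A_i)$ that meets $A_i$ is \emph{contained} in $A_i\subset V_{i-1}$: read literally, your rule adds nothing new from $U_i$, and the open set $U_i\setminus\cl(V_{i-1})$ is thrown into $J$, which can then have nonempty interior. (Concretely: cover a M\"obius band by three charts $U_1,U_2,U_3$; after gluing $U_1,U_2$ consistently, $U_3$ necessarily meets $V_2$ with both signs, and your rule discards the nonempty open set $U_3\setminus(U_1\cup U_2)$.) If instead one reads the rule generously so as to keep points of $C$ outside $\cl(V_{i-1})$, the kept component can also touch $B_i$, producing a sign conflict with the already-defined $\nu$ --- your consistency requirement (a) fails. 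Either way, the claimed density of $\bigcup_iV_i$ is not established; the argument you sketch for it (``once $U$ is processed a neighborhood of $p$ lands in some $V_i$'') presupposes that $p$ lies in a full-sign component of some chart, which is exactly what can fail to ever happen for a fixed countable cover.

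The fix is to notice that the lemma imposes \emph{no} compatibility of signs across the pieces: you only need one smooth unit normal on the open set $M\setminus J$, and a field defined by different recipes on disjoint open sets is smooth on their union. So simply set $V_i:=V_{i-1}\cup\bigl(U_i\setminus\cl(V_{i-1})\bigr)$ with $\nu:=\nu_i$ on the new part, no sign adjustment; then $M\setminus\bigcup_iV_i$ is closed and contained in $\bigcup_i\pa V_{i-1}$, a countable union of closed nowhere dense sets, hence has empty interior by Baire. This corrected induction is morally the paper's argument, which avoids the bookkeeping altogether via Zorn's lemma: among open subsets of $M$ admitting a smooth unit normal there is a maximal one $A$ (increasing unions of such sets again admit a normal), and $J=M\setminus A$ must have empty interior, since otherwise one could adjoin to $A$ a small \emph{disjoint} orientable patch $B_r(p)\cap M\subset J$, contradicting maximality. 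The disjointness is the whole point --- no matching on overlaps is ever needed.
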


\begin{proof}
  Let $\U$ denote the family of the open sets $A\subset M$ such that a smooth unit normal vector field to $M$ can be defined on $A$. Let $\U^*$ be a non-empty subset of $\U$ which is totally ordered by set inclusion, and set
  \[
  A^*:=\bigcup\{A:A\in\U^*\}\,.
  \]
  Let $\{A_j\}_{j\in\N}\subset\U^*$ be such that
  \[
  A^*=\bigcup_{j\in\N}A_j\,.
  \]
  Since $\U^*$ is totally ordered by set inclusion, we can assume without loss of generality that $A_j\subset A_{j+1}$. By exploiting this monotonicity property we easily prove that $A^*\in\U$, and therefore that $\U^*$ admits an upper bound in the ordering of $\U$. By Zorn's lemma, $\U$ admits a maximal element $A$ with respect to set inclusion. The set $J=M\setminus A$ is closed in $M$. Should $J$ have non-empty interior, we could find $r>0$ and $p\in J$ such that $B_r(p)\cap M\subset J$. Up to decrease $r$, we can entail $B_r(p)\cap M\in\U$, and then that $A\cup(B_r(p)\cap M)\in\U$, against the maximality of $A$ in $\U$.
\end{proof}

Next we show that any $(K,E) \in \KK$ such that $K$ is a smooth orientable hypersurface outside of a meager closed set can be approximated in energy by sets $F\in\E$.

\begin{lemma}\label{lemma opening}
  Let $(K,E)\in\KK$, that is, let $K$ be $\H^n$-rectifiable, relatively closed in $\Om$, and $\C$-spanning $W$, and let $E\subset\Om$ be open with $\Om\cap\cl(\pa^*E)=\Om\cap\pa E\subset K$. Let $\Sigma\subset K$ be a closed set with empty interior relatively to $K$ such that $K\setminus\Sigma$ is a smooth hypersurface in $\Omega$ and such that there exists $\nu\in C^\infty(K\setminus\Sigma;\SS^n)$ with $\nu(x)^\perp=T_x(K\setminus\Sigma)$ for every $x\in K\setminus\Sigma$. Let
  \begin{eqnarray*}
  M_0:=(K\setminus \Sigma)\setminus\cl(E)\,,\qquad M_1:=(K\setminus\Sigma)\cap E\,,\qquad M:=M_0\cup M_1=K\setminus(\Sigma\cup\pa E)\,.
  \end{eqnarray*}
  For every $x \in M$, let $\rho (x) > 0$ be such that $\{x + t\, \rho(x)\, \nu(x)\, \colon \, x \in M \mbox{ and } |t| < 1\}$ is a tubular neighborhood of $M$ in $\R^{n+1}$ (see e.g. \cite[Theorem 6.24]{Lee_DG}). Also, let $\|A_M\|(x)$ be the maximal principal curvature (in absolute value) of $M$ at $x$. Define then a positive function $u:M\to(0,\eta]$ by setting
  \[
  u(x):=\min\Big\{\eta,\frac{\dist(x,\Sigma\cup\pa E\cup W)}2\,,\de\,\rho(x)\,, \frac{\de}{\|A_M\|(x)}\Big\}\,,\qquad\eta\,,\de\in(0,1)\,,
  \]
  where $\eta,\de\in(0,1)$, and let
  \begin{eqnarray*}
    A_0&:=&\Big\{x+t\,u(x)\,\nu(x):x\in M_0\,,0<t<1\Big\}\,,
    \\
    A_1&:=&\Big\{x+t\,u(x)\,\nu(x):x\in M_1\,,0<t<1\Big\}\,,
    \\
    F&:=&A_0\cup \big(E\setminus \cl(A_1)\big)\,;
  \end{eqnarray*}
  see
  \begin{figure}
    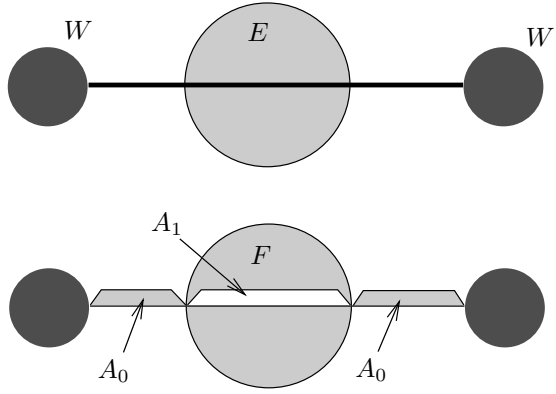\caption{{\small The construction in Lemma \ref{lemma opening}. The part of $K$ outside $\pa^*E$ is denoted by a bold line to recall that in computing $\F(K,E)$ it is counted with multiplicity $2$. Notice that, in principle, $K\setminus\pa^*E$ could intersect $E$.}}\label{fig A0A1}
  \end{figure}
  Figure \ref{fig A0A1}. Then $F\subset\Om$ is open, $\pa F$ is $\H^n$-rectifiable, $K\subset\Om\cap\pa F$ (in particular, $\Om\cap\pa F$ is $\C$-spanning $W$), and
  \begin{equation}
    \label{opening energy}
      \limsup_{\de\to 0^+}\,\limsup_{\eta\to 0^+}\H^n(\Om\cap\pa F)\le\F(K,E)\,.
  \end{equation}
\end{lemma}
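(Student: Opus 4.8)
The plan is to build $F$ explicitly as described and check the three structural claims ($F$ open, $\pa F$ $\H^n$-rectifiable, $K \subset \Om \cap \pa F$) by elementary arguments, then devote the bulk of the work to the energy estimate \eqref{opening energy}. First I would observe that, by the choice of $u$ as a minimum involving $\de\,\rho(x)$ and the tubular-neighbourhood hypothesis, the maps $x \mapsto x + t\,u(x)\,\nu(x)$ are, for each fixed $|t|<1$, diffeomorphisms of $M_i$ onto their images, so $A_0$ and $A_1$ are open sets; moreover, since $u \le \dist(\cdot,\Sigma\cup\pa E\cup W)/2$, the closures of $A_0,A_1$ stay inside $\Om$ and away from $\Sigma\cup\pa E$, so $E \setminus \cl(A_1)$ is open and $F = A_0 \cup (E\setminus\cl(A_1))$ is open. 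For the rectifiability of $\pa F$ and the inclusion $K \subset \pa F$: a point $x \in M_0$ lies on $\pa A_0$ (it is the $t=0$ endpoint of a fibre of $A_0$) and, since $u(x)$ is small relative to $\dist(x,\cl(E))$, a neighbourhood of $x$ in $\Om$ meets $F$ only in $A_0$, so $x \in \pa F$; a point $x \in M_1$ lies on $\pa A_1$ and on $\pa E$, hence near $x$ the set $F$ looks like $E$ minus a one-sided collar, so again $x \in \pa F$; points of $\Sigma \cup \pa E$, which is $\H^n$-a.e. equal to $\Om\cap\pa^*E$ together with an $\H^n$-null set, are handled by noting $\pa E \subset K$ and that $\pa^*E \setminus \cl(A_1) \subset \pa F$ up to lower-dimensional pieces. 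Since $\pa F$ is contained in the union of $K$ (rectifiable), the "outer caps" $\{x + u(x)\nu(x) : x \in M_i\}$ (Lipschitz images of rectifiable sets, hence rectifiable), and the "lateral boundary" swept by the fibre endpoints over $\pa M_i$ (contained in $I_{\de}(\Sigma\cup\pa E\cup W)$, which we will see contributes negligibly), $\pa F$ is $\H^n$-rectifiable, and $K \subset \Om\cap\pa F$ gives the $\C$-spanning property via $(K,E)\in\KK$.

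For the energy estimate, I would decompose $\Om \cap \pa F$ into three pieces: (i) the inner face $M_0 \cup (\Om\cap\pa^*E \setminus \cl(A_1))$, essentially a subset of $K$ and of $\pa E$; (ii) the two outer caps $N_i := \{x + u(x)\,\nu(x) : x \in M_i\}$; and (iii) the lateral boundary $L$, contained in the $\de\,(\mbox{stuff})$-tube over $\bd M_0 \cup \bd M_1 \subset \Sigma\cup\pa E\cup W$. The key point is that $F$ has been constructed so that $\pa F$, away from $L$, consists of the two copies $N_0, M_0$ of $M_0$ (coming from the two sides of the thin slab $A_0$) plus the copies $N_1$ and $(\Om\cap\pa^*E)\setminus(\mbox{collar region})$ associated with $M_1$; counting these, the $M_0$-part contributes $\approx 2\,\H^n(M_0)$ and the $M_1$-part contributes $\approx \H^n(M_1) + \H^n(\Om\cap\pa^*E) - (\mbox{overlap})$, which after cancellation of the collar gives $\le \H^n(\Om\cap\pa^*E) + \H^n(M_1)$. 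Since $M = K\setminus(\Sigma\cup\pa E)$ with $\Sigma\cup\pa E$ of measure $\H^n(\Om\cap\pa^*E)$ relative to the $\pa E$ part and $\H^n$-null on the $\Sigma\setminus\pa E$ part, one has $\H^n(M_0) = \H^n(K\setminus\cl(E))$ up to null sets, $\H^n(M_1) = \H^n((K\cap E)\setminus\Sigma)$, and adding the inner-face contribution assembles exactly $\H^n(\Om\cap\pa^*E) + 2\,\H^n(K\setminus\pa^*E) = \F(K,E)$, modulo two error terms to be absorbed in the limits.

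The two error terms are: first, the area distortion of the caps $N_i$ versus $M_i$ — here I would use that $N_i$ is the image of $M_i$ under $x \mapsto x + u(x)\nu(x)$ with $\|A_M\|(x)\,u(x) \le \de$, so the Jacobian of this map is $1 + \mathrm{O}(\de) + \mathrm{O}(|\nabla u|)$; the curvature term is controlled by $\de$ uniformly, but $|\nabla u|$ need not be small pointwise, so I would instead exhaust $M$ by compact pieces on which $u$ is Lipschitz with controlled constant (or regularize $u$), take $\eta \to 0^+$ first to make the total mass of the excluded region near $\Sigma\cup\pa E\cup W$ tend to zero (this is where $\eta$, bounding $u$ from above, does the job), and then let $\de \to 0^+$ to kill the curvature distortion. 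Second, the lateral term $\H^n(L)$: $L$ lies over $\bd M_0 \cup \bd M_1$, a set that is $\H^n$-null (it sits inside $\Sigma \cup \pa E \cup W$, and more precisely the relevant part of $\Sigma$ is $\H^n$-null while the $\pa^*E$ part is already accounted as an interior face and the $W$ part is outside $\Om$); since $L$ is swept with fibre-length $\le \de\cdot(\mbox{bounded})$ over a set of finite $\H^{n-1}$-measure on the regular part, $\H^n(L) = \mathrm{O}(\de)$ — but making this rigorous, especially near the singular set $\Sigma$ where no a priori $\H^{n-1}$ bound is available, is the delicate point. I expect \textbf{this control of the lateral/singular contribution} — ensuring that the "opening up" near $\Sigma\cup\pa E\cup W$ adds only $o(1)$ area as $\eta\to 0^+$ then $\de\to 0^+$ — to be the main obstacle; the device that makes it work is precisely the definition of $u$ vanishing (at rate controlled by $\de$ and cut off by $\eta$) as one approaches $\Sigma\cup\pa E\cup W$, together with the fact that $\H^n(\Sigma\setminus\pa E)=0$ so that only a null set needs to be "collared away", its $\eta$-neighbourhood in $K$ having $\H^n$-measure $\to 0$ as $\eta\to 0^+$ by the regularity of $K\setminus\Sigma$ and dominated convergence.
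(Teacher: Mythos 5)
The main gap is your handling of the lateral boundary $L$, which you single out as the principal obstacle and propose to control by an $\H^{n-1}$ bound over $\bd M_0\cup\bd M_1$, falling back on $\H^n(\Sigma\setminus\pa E)=0$ near the singular set. This is doubly problematic. First, the lemma does not assume $\H^n(\Sigma\setminus\pa E)=0$: the discussion preceding it explicitly disclaims this property for the enlarged $\Sigma$ produced by the orientability lemma, and the application in Section \ref{appendix remove A4} invokes the lemma with $\Sigma$ merely closed and meager, so your fallback is unavailable. Second, and more importantly, no estimate on $L$ is needed: since $u(x)\le\tfrac12\dist(x,\Sigma\cup\pa E\cup W)$, the slabs $A_0,A_1$ pinch to zero thickness as $x$ approaches $\Sigma\cup\pa E$, so inside $\Om$ the set $\cl(A_k)\setminus A_k$ consists only of the two faces $M_k$ and $\{x+u(x)\,\nu(x):x\in M_k\}$ together with points of $\Om\cap\cl(M_k)\subset K$. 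There is no lateral face of positive $\H^n$-measure; one proves the exact identity $\Om\cap\pa F=K\cup\{x+u(x)\,\nu(x):x\in M\}$ (which also yields $K\subset\Om\cap\pa F$ as a genuine inclusion, not just ``up to lower-dimensional pieces'' --- the version needed for the spanning property), and the would-be lateral contribution is absorbed into $\H^n(K)=\H^n(\Om\cap\pa^*E)+\H^n(K\setminus\pa^*E)$, counted exactly once. You name the right mechanism --- $u$ vanishing at $\Sigma\cup\pa E\cup W$ --- but draw the wrong conclusion from it: it does not make $\H^n(L)$ small, it makes $L$ a subset of $K$.

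Two further points. Your assertion that $x\in M_1$ ``lies on $\pa E$'' is false: $M_1=(K\setminus\Sigma)\cap E$ sits in the open set $E$, each fibre of $A_1$ over $x$ stays at distance at least $\tfrac12\dist(x,\pa E)>0$ from $\pa E$, and the whole of $\Om\cap\pa E$ survives in $\pa F$; consequently there is no ``cancellation of the collar'' against $\H^n(\Om\cap\pa^*E)$, and $M_1$ contributes two sheets ($M_1$ itself --- which your inner face omits --- and its cap), exactly like $M_0$. This happens not to break the final inequality, since $2\,\H^n(M)\le2\,\H^n(K\setminus\pa^*E)$, but the bookkeeping as written is wrong. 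Finally, for the cap area, the ``Lipschitz with controlled constant'' device is insufficient as stated: wherever the distance term in the minimum is active, $|\nabla u|$ is of order one, producing a Jacobian factor bounded away from $1$ that does not disappear as $\eta,\de\to0^+$. The decisive step, which your remark that ``$\eta$ does the job'' is circling around without reaching, is to work on the level set $M_\eta=\{x\in M:u(x)=\eta\}$: there $u$ is constant, the cap is the smooth normal translate $x\mapsto x+\eta\,\nu(x)$ with Jacobian $\prod_{i=1}^n|1+\eta\,\k_i|\le(1+\de)^n$ because $\eta\le\de/\|A_M\|$ on $M_\eta$, and $M_\eta$ increases to $M$ as $\eta\to0^+$.
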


\begin{proof}
  {\it Step one}: In this step we prove that
  \begin{eqnarray}
    \label{opening -2}
    &&\mbox{$F$ is open with $F\subset\Om$}\,,
    \\
    \label{opening -1}
    &&K\cup\Big\{x+u(x)\,\nu(x):x\in M\Big\}=\Om\cap\pa F\,.
  \end{eqnarray}
  Since $M_0$ and $M_1$ are relatively open in $M$ and $u$ is positive on $M$, it is easily seen that $A_0$ and $A_1$ are open, and thus that $F$ is open. Let us define a map $\Phi:M\times\R\to\R^{n+1}$ by setting $\Phi(x,t)=x+t\,u(x)\,\nu(x)$, so that
  \begin{equation}
    \label{opening 0}
      A_k=\Phi(M_k\times(0,1))\,,\qquad \Phi(M_k\times\{0,1\})\subset\pa A_k\,,\qquad k=0,1\,.
  \end{equation}
  Since $M\subset\Om$ and $u(x)<\dist(x,W)$ for every $x\in M$, we deduce that
  \begin{equation}
    \label{opening 1.1}
    \Phi(M\times[0,1])\subset\Om\,.
  \end{equation}
  In particular, $F\subset\Omega$ and \eqref{opening -2} is proved. Next we prove that
  \begin{equation}
    \label{fix 4}
    \Om\cap\pa F\,\subset\,\,K\cup\Big\{x+u(x)\,\nu(x):x\in M\Big\}\,.
  \end{equation}
  Since the boundary of the union and of the intersection of two sets is contained in the union of the boundaries, and since the boundary of a set coincides with the boundary of its complement, the inclusion $\pa\cl(A_1)\subset\pa A_1$ gives
  \begin{eqnarray*}
    \Om\cap\pa F&\subset&\Om\cap\Big(\pa A_0\cup\pa[E\setminus\cl(A_1)]\Big)\,\subset\,
    \Om\cap\Big(\pa A_0\cup\pa E\cup\pa [\R^{n+1}\setminus\cl(A_1)]\Big)
    \\
    &=&
    \Om\cap\Big(\pa A_0\cup\pa E\cup\pa \cl(A_1)\Big)\,\subset\,\Om\cap\big(\pa E\cup\pa A_0\cup\pa A_1\big)\,.
  \end{eqnarray*}
  Hence \eqref{fix 4} follows from $\Om\cap\pa E\subset K$, and the fact that, for $k=0,1$,
  \begin{eqnarray*}
 \Om \cap  \pa A_k&\subset&K\cup \Phi(M_k\times\{0,1\})
  \\
  &\subset&K\cup\big\{x+u(x)\,\nu(x):x\in M_k\big\}\,.
  \end{eqnarray*}
  This proves \eqref{fix 4}, so that the proof of \eqref{opening -1} is completed by showing that
  \begin{eqnarray}
    \label{opening 3}
    M\cup \Big\{x+ u(x)\,\nu(x):x\in M\Big\}&\subset&\Om\cap\pa F\,,
    \\
    \label{fix 2}
    \Sigma\setminus\pa E&\subset&\Om\cap\pa F\,,
    \\
    \label{fix 3}
    \Om\cap\pa E&\subset&\Om\cap\pa F\,.
  \end{eqnarray}
  {\it Proof of \eqref{opening 3}}: Since $M_0\cap\cl(E)=\emptyset$, $M_1\subset E$, and $u(x)<\dist(x,\pa E)$ for every $x\in M$, by \eqref{opening 0} we find
  \begin{equation}
    \label{opening 1}
    \Phi\big(M_0\times[0,1]\big)\cap\cl(E)=\emptyset\,,\qquad     \Phi\big(M_1\times[0,1]\big)\subset E\,.
  \end{equation}
  By \eqref{opening 0} and \eqref{opening 1} we find $A_0\cap\cl(E)=\emptyset$ and $A_1\subset E$, so that
  \[
  \Big(\big(\pa A_0\big)\setminus\cl(E)\Big)\,\cup\,\Big(E\cap\pa A_1\Big)\subset\pa F\,.
  \]
  Again by \eqref{opening 0} and \eqref{opening 1} we have
  \begin{equation}
    \label{opening 2}
    \Phi(M_0\times\{0,1\})\subset\pa A_0\setminus\cl(E)\,,\qquad \Phi(M_1\times\{0,1\})\subset E\cap \pa A_1\,,
  \end{equation}
  and \eqref{opening 3} follows by \eqref{opening 1.1} and \eqref{opening 2}. {\it Proof of \eqref{fix 2}}: Since $M_1=(K\setminus\Sigma)\cap E$ and $\Sigma$ has empty interior in $K$, we find that $\cl(M_1)\cap E=K\cap E$. At the same time, $M\subset\Om\cap\pa F$ gives $M_1\cap E\subset E\cap\pa F$ and thus $\cl(M_1)\cap E\subset E\cap\pa F$: hence,
  \[
  \Sigma\cap E\,\,\subset \,\, K\cap E\,\,=\,\,\cl(M_1)\cap E\,\,\subset\,\,\Omega\cap\pa F\,;
  \]
  similarly, $M_0=(K\setminus\Sigma)\setminus\cl(E)$ implies $\cl(M_0)\setminus\cl(E)=K\setminus\cl(E)$, while $M\subset\Om\cap \pa F$ gives $\cl(M_0)\setminus\cl(E)\subset(\pa F)\setminus\cl(E)$, hence
  \[
  \Sigma\setminus\cl(E)\,\,\subset \,\,K\setminus\cl(E)\,\,\subset\,\,(\pa F)\setminus\cl(E)\,,
  \]
  which combined with $\Sigma\subset K\subset\Om$ gives $\Sigma\setminus\cl(E)\subset\Om\cap\pa F$. {\it Proof of \eqref{fix 3}}: since $F$ and $E$ coincide in the complement of $\cl(A_0)\cup\cl(A_1)$, we have
  \[
  \Om\cap\pa E\setminus\big(\cl(A_0)\cup\cl(A_1)\big)\,\,=\,\,\Om\cap\pa F\setminus\big(\cl(A_0)\cup\cl(A_1)\big)\,\,\subset\,\,\Om\cap\pa F\,.
  \]
  Let $y\in\Om\cap\pa E\cap\cl(A_1)$: by \eqref{opening 1}, $y\not\in\Phi(M_1\times[0,1])$ while $A_1=\Phi(M_1\times(0,1))$, so that $y$ is in the closure of $M_1$, and thus of $M$, relatively to $K$. In particular, $y\in\Om\cap\cl(M)\subset\Om\cap\pa F$ thanks to $M\subset\Om\cap\pa F$. Similarly, we can show that $\Om\cap\pa E\cap\cl(A_0)\subset\Om\cap\pa F$ and thus prove \eqref{fix 3}.

  \medskip

  \noindent {\it Step two}: By \eqref{opening -2} and \eqref{opening -1} we immediately deduce all the conclusions except \eqref{opening energy}. To prove \eqref{opening energy} we first notice that thanks to \eqref{opening -1}
  \begin{equation} \label{energy estimate opening1}
  \H^n(\Om\cap\pa F)\le\H^n(K)+\H^n\Big(\big\{x+u(x)\,\nu(x):x\in M\big\}\Big)\,.
  \end{equation}
  Since $\dist(x,\Sigma\cup\pa E\cup W)>0$, $\rho (x) > 0$, and $\|A_M\|(x) < \infty$ for every $x\in M$, we find that the sets
  \[
  M_\eta=\big\{x\in M:u(x)=\eta\big\}=\Big\{x\in M:\dist(x,\Sigma\cup\pa E\cup W)\ge2\eta\,, \rho (x) \geq \frac{\eta}{\delta}\,, \|A_M\|(x) \leq \frac{\delta}{\eta}\Big\}
  \]
  are increasingly converging to $M$ as $\eta\to 0^+$. Moreover, $x\mapsto x+u(x)\,\nu(x)=x+\eta\,\nu(x)$ is smooth on $M_\eta$, so that the area formula gives
  \begin{equation} \label{energy estimate opening2}
  \begin{split}
  \H^n\Big(\big\{x+u(x)\,\nu(x):x\in M_\eta\big\}\Big)&=\int_{M_\eta}\prod_{i=1}^n |1+\eta\,\k_i|
  \\
  &\le(1+\de)^n\,\H^n(M_\eta)\le(1+\de)^n\,\H^n(M)\,,
  \end{split}
  \end{equation}
  where $\k_i$ are the principal curvatures of $M$ with respect to $\nu$. In the limit as $\eta\to 0^+$, the sets $\Phi(M_\eta\times\{1\})=\{x+u(x)\,\nu(x):x\in M_\eta\}$ are increasingly converging to $\Phi(M\times\{1\}) = \{x+u(x)\,\nu(x):x\in M\}$, so that \eqref{energy estimate opening1} and \eqref{energy estimate opening2} yield
  \begin{equation} \label{energy estimate opening final}
  \limsup_{\eta\to 0^+}\H^n(\Om\cap\pa F)\le\H^n(K)+(1+\de)^n\,\H^n(M)\,.
  \end{equation}
  Finally, \eqref{opening energy} follows from \eqref{energy estimate opening final} once we observe that $M=K\setminus(\Sigma\cup\pa E)\subset K\setminus\pa^*E$, so that
  \begin{eqnarray*}
  \H^n(K)+\H^n(M)&=&\H^n(\Om\cap\pa^*E)+\H^n(K\setminus\pa^*E)+\H^n(M)
  \\
  &\le&\H^n(\Om\cap\pa^*E)+2\,\H^n(K\setminus\pa^*E)=\F(K,E)\,,
  \end{eqnarray*}
  as required.
\end{proof}

\begin{proof}
  [Proof of Theorem \ref{thm lambda is negative}] Let $(K,E) \in \KK$ be a generalized minimizer of $\psi(\e)$ satisfying the exterior collapsing condition $K \setminus \cl(E) \neq \emptyset$. The goal is to show that the Lagrange multiplier $\l$ appearing in \eqref{stationary main} must be negative. We introduce the notation
  \begin{eqnarray}\label{not cilindro}
  Q^\nu_r(x)&:=&\Big\{y \in \R^{n+1}\,:\,|(x-y)\cdot\nu|<r\,,\Big|(x-y)-[(x-y)\cdot\nu]\,\nu\Big|<r\Big\}\,,
  \\\label{not disco}
  D^\nu_r(x)&:=&\big\{y \in \R^{n+1}\,:\,|(x-y)\cdot\nu|=0\,,|x-y|<r\big\}\,,
  \end{eqnarray}
  for the cylinder $Q^\nu_r(x)$ with axis along $\nu\in\SS^n$, center at $x$, radius $r$ and height $2\,r$, and for its midsection $D^\nu_r(x)$.

  \smallskip

First recall from \cite[Formula (3.24)]{kms} that the measure $\H^n \mres K$ satisfies a uniform lower density estimate, in the sense that there is a constant $c_0(n)>0$ such that if $x \in K$ then $\H^n(K\cap B_r(x))\ge c_0\,r^n$ for every $B_r(x)\cc\Om$. The above estimate applied with $x \in K \setminus \cl(E)$ and $0 < r < \min\{ {\rm dist}(x, \pa\Om), {\rm dist}(x,\cl(E))\}$ implies that $\H^n(K \setminus \cl(E)) > 0$. By Theorem \ref{thm basic regularity}-(ii), there exists $B_{2\,r_1}(x_1)\cc\Om\setminus\cl(E)$ with $x_1\in K$ such that $K\cap B_{2\,r_1}(x_1)$ is a smooth embedded minimal surface. Let us set
  \[
  Q_1=Q^{\nu_1}_{r_1}(x_1)\,,\qquad D_1=D^{\nu_1}_{r_1}(x_1)\,,
  \]
  where $\nu_1$ is a unit normal to $K$ at $x_1$, and observe that $Q_1 \subset B_{2\,r_1}(x_1)$. Upon further decreasing the value of $r_1$, there exists a smooth solution to the minimal surfaces equation $u_1:\cl(D_1)\to\R$ such that
  \begin{equation} \label{e:primo cilindro}
  K\cap \cl(Q_1)=\big\{z+u_1(z)\,\nu_1:z\in \cl(D_1)\big\}\,, \qquad \max_{\cl(D_1)}|u_1|\le \frac{r_1}2\,.
  \end{equation}
  Next we pick a smooth function $v_1:\cl(D_1)\to\R$ with
  \begin{equation} \label{prima variazione}
  v_1=0\quad\mbox{on $\pa D_1$}\,,\qquad
  v_1>0\quad\mbox{on $D_1$}\,,\qquad\int_{D_1}v_1=1\,,
  \end{equation}
  and for $t > 0$ we define an open set $G_1^t$ by
  \begin{equation} \label{bump}
  G_1^t=\Big\{z+h\,\nu_1:z\in D_1\,,u_1(z)<h<u_1(z)+t\,v_1(z)\Big\}\,.
  \end{equation}
  For $t$ sufficiently small (depending only on $r_1$ and on the choice of $v_1$) we have that $G_1^t\subset Q_1$ with
  \begin{equation}
  \pa G_1^t\cap\pa Q_1=K\cap \pa Q_1=\big\{z+u_1(z)\,\nu_1:z\in \pa D_1\big\}\,,
  \end{equation}
  and
  \begin{equation}
  K\cap \cl(Q_1)\subset\pa G_1^t\,.
  \end{equation}
  Moreover we easily see that
  \begin{equation}
    \label{collapse G1 vol and per}
      |G_1^t|=t\,,\qquad \H^n(\pa G_1^t)=\H^n(Q_1\cap\pa G_1^t)=2\,\H^n(K\cap Q_1) + {\rm O}(t^2) \quad \mbox{as $t \to 0^+$}\,,
  \end{equation}
  where we have used $\int_{D_1} v_1=1$, $v_1=0$ on $\pa D_1$, and the fact that $u_1$ solves the minimal surfaces equation. Next, we perform an analogous construction at a point $x_2\in\Om \cap \pa^*E$, taking once again advantage of Theorem \ref{thm basic regularity}(ii). More precisely, if we let $\nu_2$ denote the exterior unit normal vector to $\pa^*E$ at $x_2$, we find a cylinder $Q_2 = Q_{r_2}^{\nu_2}(x_2)$ with mid-section $D_2 = D_{r_2}^{\nu_2}(x_2)$ and with $\dist(Q_1,Q_2)>0$, and a smooth function $u_2:\cl(D_2)\to\R$ with
  \begin{eqnarray} \label{secondo cilindro filling}
    E \cap \cl(Q_2)&=&\Big\{z+h\,\nu_2:z\in \cl(D_2)\,,-r_2\leq h<u_2(z)\Big\}\,, \label{secondo cilindro set}
    \\
    K\cap\cl(Q_2)&=&\pa E\cap\cl(Q_2)=\Big\{z+u_2(z)\,\nu_2:z\in \cl(D_2)\Big\}\,,
  \end{eqnarray}
  and
  \begin{equation}
    \label{collapse u2 cmc}
      -\Div\Big(\frac{\nabla u_2}{\sqrt{1+|\nabla u_2|^2}}\Big)=\l\quad\mbox{on $D_2$}\,,\qquad
  \max_{\cl(D_2)}|u_2|\le \frac{r_2}2\,.
  \end{equation}
  We choose a smooth function $v_2:\cl(D_2)\to\R$ with
  \begin{equation} \label{seconda variazione}
  v_2=0\quad\mbox{on $\pa D_2$}\,,\qquad
  v_2>0\quad\mbox{on $D_2$}\,,\qquad\int_{D_2}v_2=1\,,
  \end{equation}
  and then define an open set $G_2^t$ by setting
  \begin{equation} \label{cave}
  G_2^t=\Big\{z+h\,\nu_2:z\in D_2\,,u_2(z)-t\,v_2(z) < h < u_2(z)\Big\}\,.
  \end{equation}
  For $t$ small enough (depending only on $r_2$ and on the choice of $v_2$) we have that $G_2^t\subset E \cap Q_2$, with
  \begin{equation}
  \pa G_2^t\cap\pa Q_2=K\cap \pa Q_2=\big\{z+u_2(z)\,\nu_2:z\in \pa D_2\big\}\,.
  \end{equation}
Furthermore, if we let $Y$ denote the closed set
\begin{equation} \label{d:S_surface}
Y = \left\lbrace z + (u_2(z) - t v_2(z))\, \nu_2 \, \colon \, z \in \cl(D_2) \right\rbrace\,,
\end{equation}
  it is easily seen that for $t<t_0$
  \begin{equation}
    \label{collapse G2 vol and per}
      |G_2^t|=t\,,\qquad \H^n(Y) = \H^n (Y \cap Q_2) = \H^n(\pa E \cap Q_2) - \lambda\, t + {\rm O}(t^2)\,,
  \end{equation}
  where we have used $\int_{D_2} v_2=1$, $v_2=0$ on $\pa D_2$, and \eqref{collapse u2 cmc}.\\

\smallskip

   Now set
  \begin{eqnarray} \label{variation1K}
  K_t&:=&\Big(K\setminus\big(Q_1\cup Q_2\big)\Big)\cup\pa G_1^t\cup Y\,,
  \\ \label{variation1E}
  E_t&:=&\Big(E\setminus\cl(G_2^t)\Big)\cup G_1^t\,;
  \end{eqnarray}
  see
  \begin{figure}
  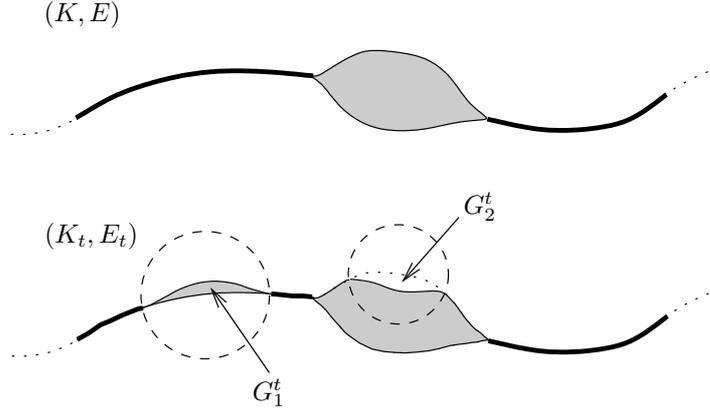\caption{{\small The competitors $(K_t,E_t)$ used in proving that if exterior collapsing occurs for $(K,E)$, then the Lagrange multplier $\l$ is non-positive. Multiplicity two regions are depicted in bold, the sets $E$ and $E_t$ in gray. The competitor $(K_t,E_t)$ is obtained by adding a volume $t$ near a point of $K\setminus\cl(E)$ by bulging one of the two available sheets, at an area cost of ${\rm O}(t^2)$ (as $K\setminus\cl(E)$ is minimal); and by restoring the total volume by pushing inwards $E$ at a point in $\pa^*E$, at an area cost of $-\l\,t+{\rm O}(t^2)$.}}\label{fig openup}
  \end{figure}
  Figure \ref{fig openup}. We claim that the following holds:
  \begin{align} \label{e:compact outside}
  K_t \setminus Q_2 &\supset K \setminus Q_2\,, \\  \label{e:boundary outside}
  \pa E_t \cap \Om \setminus (\cl (Q_1) \cup \cl(Q_2)) &= \pa E \cap \Om \setminus (\cl (Q_1) \cup \cl(Q_2)) \,,\\ \label{e:boundary preciso}
  \pa E_t &= \pa \left[ E \setminus \cl (G_2^t) \right] \cup \pa G_1^t\,,\\ \label{e:boundary inside Q1}
  \pa E_t \cap \cl(Q_1)& = \pa G_1^t\,,\\ \label{e:boundary in Q2}
  \pa E_t \cap \cl (Q_2) & = Y
  \end{align}

  The inclusion in \eqref{e:compact outside} follows from $K \setminus Q_2 \subset K \setminus (Q_1 \cup Q_2) \cup \pa G_1^t$; \eqref{e:boundary outside} is a consequence of $E_t \setminus (\cl(Q_1) \cup \cl(Q_2)) = E \setminus (\cl(Q_1) \cup \cl(Q_2))$ together with the observation that $\Om \setminus (\cl(Q_1) \cup \cl(Q_2))$ is an open set; to prove \eqref{e:boundary preciso}, it suffices to observe that $\pa \left[ E \setminus \cl(G_2^t)  \right] \subset \cl(E)$ whereas $\pa G_1^t \subset \cl(Q_1) \subset B_{2\,r_1}(x_1) \cc \Om \setminus \cl(E)$; \eqref{e:boundary inside Q1} then follows immediately from \eqref{e:boundary preciso}. To prove $\pa E_t \cap \cl (Q_2) \subset Y$ (the other inclusion being trivial), we proceed as follows. First, we deduce from \eqref{e:boundary preciso} that $\pa E_t \cap \cl (Q_2) = \pa \left[ E \setminus \cl (G_2^t) \right] \cap \cl (Q_2)$. Then, we notice that $\pa \left[ E \setminus \cl (G_2^t) \right] \cap \pa Q_2 \subset K \cap \pa Q_2 \subset Y$. Finally, suppose that $x \in \pa \left[ E \setminus \cl(G_2^t)  \right] \cap Q_2$, so that there exists a sequence $\{x_j\}_{j=1}^{\infty}$ such that $x_j \in E \setminus \cl(G_2^t) \cap Q_2$ and $x_j \to x$. In particular, we have $x_j = z_j + h_j\, \nu_2$, where $z_j \in D_2$ and $-r_2 < h_j < u_2(z_j) - t v_2(z_j)$. By compactness, and using the continuity of the functions $u_2$ and $v_2$, we have that, possibly along a (not relabeled) subsequence, $z_j \to z_\infty \in \cl(D_2)$, and $h_j \to h_{\infty} \in \left[ -r_2, u_2(z_\infty) - t v_2(z_\infty) \right]$, so that $x = z_\infty + h_\infty\, \nu_2$. But then it has to be $h_\infty = u_2(z_\infty) - t\, v_2(z_\infty)$, otherwise $x \in E \setminus \cl(G_2^t) \subset E_t$. This shows that $x \in Y$, thus completing the proof of \eqref{e:boundary in Q2}.

\smallskip

Next, we claim that $(K_t,E_t) \in \KK$, and that
  \begin{equation}
    \label{collapse stime KtEt}
    |E_t|=|E|=\e\,,\qquad \F(K_t,E_t)=\F(K,E)-\l\,t+{\rm O}(t^2)\,.
  \end{equation}

First, it is clear that $E_t \subset \Om$ is open, and that $K_t \subset \Om$ is a relatively closed and $\H^n$-rectifiable set in $\Om$. Moreover, $K_t$ is $\C$-spanning $W$. To see this, first observe that by \eqref{e:compact outside} any curve $\gamma \in \C$ with $\gamma \cap ( K \setminus Q_2 ) \neq \emptyset$ must intersect $K_t$. If, on the other hand, $\gamma \cap (K \setminus Q_2) = \emptyset$, then necessarily $\gamma \cap K \cap Q_2 \neq \emptyset$ because $K$ is $\C$-spanning $W$. In turn, this implies that $\gamma \cap \pa E \cap \cl(Q_2) \neq \emptyset$, and thus also $\gamma \cap Y \neq \emptyset$ as a consequence of \cite[Lemma 2.3]{kms} since $Y$ is a diffeomorphic image of $\pa E \cap \cl (Q_2)$. Finally, $\Om \cap \pa E_t \subset K_t$ follows immediately from \eqref{e:boundary outside}, \eqref{e:boundary inside Q1}, and \eqref{e:boundary in Q2}. The volume identity in \eqref{collapse stime KtEt} is deduced from the volume identities in \eqref{collapse G1 vol and per} and \eqref{collapse G2 vol and per} given that $G_1^t$ and $E$ are disjoint. We can then proceed with the proof of the second equation in \eqref{collapse stime KtEt}. Using the analogous of \eqref{e:boundary outside} for the reduced boundary together with \eqref{e:boundary inside Q1} and \eqref{e:boundary in Q2}, and then applying \eqref{collapse G1 vol and per} and \eqref{collapse G2 vol and per} we obtain
\begin{equation} \label{perimeter 1}
\begin{split}
\H^n(\Om \cap \pa^*E_t) & = \H^n(\Om \cap \pa^*E \setminus (\cl(Q_1) \cup \cl (Q_2))) + \H^n(\pa G_1^t) + \H^n(Y) \\
& = \H^n(\Om \cap \pa^*E) + 2\, \H^n((K \setminus \pa^*E) \cap Q_1) -\lambda \, t + {\rm O}(t^2) \,,
\end{split}
\end{equation}
whereas
\begin{equation} \label{perimeter 2}
2\, \H^n(K_t \setminus \pa^*E_t) = 2\, \H^n((K \setminus \pa^*E) \setminus Q_1 )\,.
\end{equation}
The second part of \eqref{collapse stime KtEt} is then obtained by summing \eqref{perimeter 1} and \eqref{perimeter 2}.

\smallskip

Finally, we claim that there exists a closed set $\Sigma_t \subset K_t$ with empty interior relatively to $K_t$ and such that $K_t \setminus \Sigma_t$ is a smooth orientable hypersurface in $\Om$. Indeed, in the construction of $K_t$ from $K$, we may have increased $\Sigma$, at most, by adding to it the closed sets $\{z+u_k(z)\,\nu_k\,\colon\,z\in\pa D_k\}$, which have definitely empty interiors relatively to $K_t$.

  \smallskip

   Therefore we can apply Lemma \ref{lemma opening} to $(K_t,E_t)$ to find a sequence $\{F_j\}_j\subset\E$ such that $\Om\cap\pa F_j$ is $\C$-spanning $W$, with
  \begin{equation}
  F_j\to E_t\quad\mbox{in $L^1(\R^{n+1})$}\,,\qquad\limsup_{j\to\infty}\H^n(\Om\cap\pa F_j)\le\F(K_t,E_t)\,.
  \end{equation}
  Since $|F_j|\to |E_t|=\e$ as $j\to\infty$ and $\psi$ is lower semicontinuous on $(0,\infty)$ (see \cite[Theorem 1.9]{kms}), we conclude that
  \begin{eqnarray*}
  \F(K,E)&=&\psi(\e)\le\liminf_{j\to\infty}\psi\big(|F_j|\big)
  \le\limsup_{j\to\infty}\H^n(\Om\cap\pa F_j)\\
 & \leq &\F(K_t,E_t)=\F(K,E)-\l\,t+{\rm O}(t^2)\,,
  \end{eqnarray*}
  thanks to \eqref{collapse stime KtEt}. By letting $t\to 0^+$ we find that it must be $\l\le 0$, thus completing the proof.
\end{proof}

\section{Proof of Theorem \ref{thm convex hull}} \label{sec:proof2}

\begin{proof}[Proof of Theorem \ref{thm convex hull}]

Let $(K,E) \in \KK$ be such that
  \begin{equation}
    \label{basic stationary main star}
    \l\,\int_{\pa^*E}X\cdot\nu_E\,d\H^n=\int_{\pa^*E}\Div^K\,X\,d\H^n+2\,\int_{K\setminus\pa^*E}\Div^K\,X\,d\H^n\,,
  \end{equation}
  with $\l\le 0$ for every $X\in C^1_c(\R^{n+1};\R^{n+1})$ with $X \cdot \nu_{\Omega} = 0$ on $\pa\Omega$. We then prove that $K \subset \conv(W)$ if $\l = 0$, and $K \subset \conv(W \cap \cl(K))$ if $\l < 0$. The first claim is classical: indeed, if \eqref{basic stationary main star} holds with $\l = 0$ then the varifold $V$ supported on $K$ with multiplicity $\theta = 1$ on $\Om \cap \pa^*E$ and $\theta = 2$ on $K \setminus \pa^*E$ is stationary in $\Om = \R^{n+1} \setminus W$. The result is then a straightforward consequence of \cite[Theorem 19.2]{SimonLN}. We are left with the case $\l < 0$. In order to ease the notation, we set $Z := \conv(W \cap \cl(K))$, and, denoting $u(x):=\dist(x,Z)$, we consider the test field
\begin{equation} \label{e:test field completo}
X(x):=\chi(x)\,\g(u(x))\,\nabla u(x)\,,
\end{equation}
where $\g$ is a non-negative smooth function on $[0,\infty)$ with $\g=0$ on an interval $[0,2\eta)$ and $\g'\ge 0$ everywhere, and $\chi$ is a smooth cut-off function with $0 \leq \chi \leq 1$ and
\[
\chi(x) =
\begin{cases}
1 &\mbox{on $I_\s(K \setminus U_\eta(Z))$} \\
0 &\mbox{on $I_\s(W) \cup (\R^{n+1} \setminus B_R(0))$}\,.
\end{cases}
\]
Here $0 < \s \ll \eta$, and $B_R(0)$ is a large ball containing $K \cup W$. Observe that the function $\chi$ is well-defined. Indeed, the definition of $Z$ implies that the set $K \setminus U_\eta(Z)$ is closed in $\R^{n+1}$, so that ${\rm dist}(K \setminus U_\eta(Z), W) \geq 3\,\s > 0$, and thus the closed sets $I_\s(K \setminus U_\eta(Z))$ and $I_\s(W)$ are disjoint. Since $X = 0$ both in a neighborhood of $W$ and outside of $B_R(0)$, $X$ is admissible in \eqref{basic stationary main star}. Furthermore,
\begin{equation} \label{e:test field K}
X(x) = \g(u(x))\,\nabla u(x) \qquad \mbox{in a neighborhood of $K$}\,.
\end{equation}

Hence, by $|\nabla u|=1$ we compute
\begin{eqnarray*}
\nabla X=\g'(u)\,\nabla u\otimes\nabla u+\g(u)\,\nabla^2 u \qquad &\mbox{in a neighborhood of $K$}\,,&
\\
\Div X=\g'(u)+\g(u)\,\Delta u \qquad &\mbox{in a neighborhood of $K$}\,,&
\\
\Div^KX=\g'(u)\,(1-(\nabla u\cdot\nu)^2)+\g(u)\,\big(\Delta u-\nabla^2 u[\nu,\nu]\big) \qquad &\mbox{$\H^n$-a.e. on $K$}\,,&
\end{eqnarray*}
where $\nu(x)$ is a unit normal vector to $K$ at $x$, for every $x\in K$ such that the approximate tangent plane $T_xK$ exists. Since $u$ is convex (distance from a convex set) we have $\Delta u\ge0$, $\Delta u-\nabla^2 u[\nu,\nu]\ge0$, and thus $\Div^KX\ge0$ $\H^n$-a.e. on $K$. By  \cite[Chapter 16]{maggiBOOK}, for a.e. $\eta>0$, $E\setminus I_\eta(Z)$ is a set of finite perimeter with
\begin{eqnarray*}
  \pa^*(E\setminus I_\eta(Z))=\big((\pa^*E)\setminus I_\eta(Z)\big)\cup\big(E\cap \pa^*I_\eta(Z)\big)\qquad\mbox{modulo $\H^n$}\,,
\end{eqnarray*}
and
\begin{eqnarray*}
  &&\nu_{E\setminus I_\eta(Z)}=\nu_E\,,\qquad\hspace{0.4cm}\mbox{$\H^n$-a.e. on $(\pa^*E)\setminus I_\eta(Z)$}\,,
  \\
  &&\nu_{E\setminus I_\eta(Z)}=-\nabla u\,,\qquad\mbox{$\H^n$-a.e. on $E\cap\pa^*I_\eta(Z)$}\,.
\end{eqnarray*}
By \eqref{basic stationary main star}, $\Div^KX\ge0$, and by applying the divergence theorem on $E\setminus I_\eta(Z)$ we find that
\begin{eqnarray*}
0&\le&\l\int_{\pa^*E}X\cdot\nu_E=\l\,\int_{(\pa^*E)\setminus I_\eta(Z)} (\g(u)\,\nabla u)\cdot\nu_E\\
&=&
\l\,\Big\{\int_{E\setminus I_\eta(Z)} \Div (\g(u)\,\nabla u)-\int_{E\cap\pa^*\,I_\eta(Z)}(\g(u)\,\nabla u)\cdot(-\nabla u)\Big\}
\\
&=&
  \l\,\Big\{\int_{E\setminus I_\eta(Z)}\g'(u)+\g(u)\,\Delta u +\int_{E\cap\pa^*\,I_\eta(Z)}\g(u)\,\Big\}
\end{eqnarray*}
Now we use the condition $\l < 0$. We have
\[
\int_{E\setminus I_\eta(Z)}\g'(u)+\g(u)\,\Delta u +\int_{E\cap\pa^*I_\eta(Z)}\g(u)=0\,,
\]
which implies $|E\setminus I_\eta(Z)|=0$ by the arbitrariness of $\g$, and thus $E\subset Z$ by the arbitrariness of $\eta$. Applying again \eqref{basic stationary main star} we now find
\[
0=\int_{\pa^*E}\Div^KX+2\int_{K\setminus\pa^*E}\Div^KX=2\int_{K\setminus\pa^*E}\Div^KX
\]
which now gives $\H^n(K\setminus I_\eta(Z))=0$ for every $\eta>0$. Thus $K\subset Z$, as claimed.
\end{proof}

\section{Removing assumption (A4)}\label{appendix remove A4} In this final section we show that all the results in \cite{kms} and in the present paper hold without the need of assuming (A4) from Assumption \ref{ass:main}. We notice that (A4) corresponds to (1.12) in \cite{kms}.

\begin{theorem}
  Theorem 1.4, Theorem 1.6 and Theorem 1.9 from \cite{kms} and Theorem 2.6, Theorem 2.8 and Theorem 2.9 from this paper hold under the sole assumption that $W$ and $\C$ satisfy the conditions (A1), (A2) and (A3) stated in Assumption \ref{ass:main}.
\end{theorem}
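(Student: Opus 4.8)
The plan is to show that assumption (A4) enters the analysis of \cite{kms} at a single point — the construction of competitors for the capillarity problem $\psi(\eps)$ — and that Lemma \ref{lemma opening}, whose proof (as one verifies by inspection) never invokes (A4), supplies an alternative construction valid under (A1)--(A3) alone. Concretely, I would first audit \cite{kms} and record that (A4) (equivalently, (1.12) there) is used exclusively to produce, for each $\eps>0$, an open set $E\in\E$ with $|E|=\eps$, with $\Om\cap\pa E$ $\C$-spanning $W$, and with $\H^n(\Om\cap\pa E)\le 2\ell+C\,\eps^{n/(n+1)}$; this is precisely the upper bound asserted in \cite[Theorem 1.4]{kms} (Theorem \ref{thm lsc}), and it is the only place where the ``slack'' property of (A4) is needed. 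The compactness and lower-semicontinuity estimates yielding existence of generalized minimizers, the derivation of the Euler--Lagrange equation \eqref{stationary main} via Allard's theorem, and the lower semicontinuity of $\psi$ on $(0,\infty)$, are all independent of (A4). Thus it suffices to re-derive $\psi(\eps)\le 2\ell+C\,\eps^{n/(n+1)}$ from (A1)--(A3) only.

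To do this, I would fix $\eps>0$, take a Plateau minimizer $S\subset\Om$ of $\ell$ as in (A1) (a relatively compact, $\H^n$-rectifiable, $\C$-spanning set with $\H^n(S)=\ell$; note $\ell>0$ since $\emptyset\notin\Sc$), and first arrange that $S$ is a smooth hypersurface off a meager closed set carrying a smooth unit normal. For $n=2$ this follows from Plateau's laws \cite{Almgren76,taylor76} (the singular set is a locally finite union of arcs and points); for general $n$ one invokes Almgren's partial regularity, noting that the (relatively) closed, $\H^n$-negligible singular set $\Sigma\subset S$ has empty interior in $S$ because every nonempty relatively open subset of $S$ has positive $\H^n$-measure by the density lower bound for minimal sets. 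Applying Lemma \ref{lemma orientabile} to $S\setminus\Sigma$ and enlarging $\Sigma$, one obtains a smooth normal $\nu$ on $S\setminus\Sigma$. Then $(S,\emptyset)\in\KK$ with $M=S\setminus\Sigma$, $M_1=\emptyset$, and $\F(S,\emptyset)=2\,\H^n(S)=2\ell$, so Lemma \ref{lemma opening} produces an open set $F=A_0\in\E$ with $\pa F$ $\H^n$-rectifiable, $S\subset\Om\cap\pa F$ (hence $\Om\cap\pa F$ is $\C$-spanning $W$), $\limsup_{\de\to0^+}\limsup_{\eta\to0^+}\H^n(\Om\cap\pa F)\le2\ell$, and $|F|\le(1+\de)^n\,\eta\,\ell\to0$ as $\eta\to0^+$. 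Choosing $\de$, then $\eta$, suitably small gives an $F$ with $\H^n(\Om\cap\pa F)\le2\ell+\eps^{n/(n+1)}$ and $|F|<\eps$. Finally, I would add to $F$ a ball $B\subset\Om$ disjoint from $W$ and from the bounded set $\cl(F)$ with $|B|=\eps-|F|$: the set $E_\eps:=F\cup B$ lies in $\E$, has $|E_\eps|=\eps$, has $\Om\cap\pa E_\eps\supset S$ (so $\Om\cap\pa E_\eps$ is $\C$-spanning $W$), and $\H^n(\Om\cap\pa E_\eps)\le\H^n(\Om\cap\pa F)+\H^n(\pa B)\le2\ell+C\,\eps^{n/(n+1)}$.

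Having re-established $\psi(\eps)\le2\ell+C\,\eps^{n/(n+1)}$ under (A1)--(A3) alone, Theorems 1.4, 1.6 and 1.9 of \cite{kms} follow as in that paper, and therefore so do Theorems \ref{thm lambda is negative} and \ref{thm convex hull} of the present paper — which use only those results together with Lemmas \ref{lemma orientabile} and \ref{lemma opening} — and hence also Theorem \ref{thm:main}. I expect the main obstacle to be the first step: rigorously confirming, by a careful reading of \cite{kms}, that (A4) is genuinely confined to the competitor construction, in particular that the ``localization'' modifications of minimizing sequences in Theorem \ref{thm lsc} and the lower-semicontinuity arguments do not secretly rely on it; a lesser point is supplying the correct regularity input for the minimizer $S$ in dimensions $n\ge3$ so that Lemma \ref{lemma opening} applies to $(S,\emptyset)$.
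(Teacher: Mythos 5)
Your proposal is correct and follows essentially the same route as the paper: identify that (A4) enters \cite{kms} only through the upper bound $\psi(\e)\le 2\ell+C\,\e^{n/(n+1)}$, and replace the two-sided $\eta$-neighborhoods of a Plateau minimizer $S$ by the one-sided neighborhoods produced by Lemma \ref{lemma opening} applied to $(S,\emptyset)$ (after using Almgren's partial regularity and Lemma \ref{lemma orientabile} to obtain a meager closed $\Sigma$ off which $S$ is smooth and orientable), so that $S\subset\Om\cap\pa F$ makes the competitor automatically $\C$-spanning, and then restore the volume with a disjoint ball. The only differences are presentational — you spell out the empty-interior argument for the singular set and the volume bound $|F|\le(1+\de)^n\eta\,\ell$ slightly more explicitly than the paper does.
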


\begin{proof}
  As noticed in the introductory remarks to the proof of Theorem 1.4 from \cite{kms}, see section 3 of that paper, assumption (A4) (equivalently, \cite[(1.12)]{kms}) is only used in step one of \cite[Proof of Theorem 1.4]{kms} to show that
  \begin{equation}
    \label{step one from thm 1.4 kms}
     \psi(\e)\le 2\,\ell+C\,\e^{n/(n+1)}\,.
  \end{equation}
  Indeed, \eqref{step one from thm 1.4 kms} is proved in \cite{kms} by considering a minimizer $S$ of $\ell$, and then by using as competitors in $\psi(\e)$ the open sets, corresponding to a sequence $\eta_j\to0^+$, obtained by first taking open $\eta_j$-neighborhoods $F_j$ of $S$ in $\Om$ (contributing in the limit $j\to\infty$ to the factor $2\,\ell$ in \eqref{step one from thm 1.4 kms}), and then by adding to these neighborhoods some disjoint balls of volume $\e-|F_j|$ (whose energy contributions are controlled by $C\,\e^{n/(n+1)}$). The role of assumption (A4) is ensuring that the boundaries $\Om\cap\pa F_j$ are $\C$-spanning $W$, and thus that these open set are admissible competitors for $\psi(\e)$.

  We can avoid this difficulty if, rather than working with $\eta$-neighborhoods of $S$, we exploit Lemma \ref{lemma opening} to work with ``unilateral'' open neighborhoods of $S$, which still contain $S$ in their boundary, and thus are automatically $\C$-spanning. More precisely, let us recall that if $S$ is a minimizer of $\ell$, then there exists an $\H^n$-negligible and closed subset $\Sigma^*$ of $S$ such that $S\setminus\Sigma^*$ is a smooth hypersurface (indeed, $S$ is an Almgren minimizer, and therefore it is $\H^n$-a.e. everywhere smooth by the main result in \cite{Almgren76}). By Lemma \ref{lemma orientabile}, we can find a closed meager subset $\Sigma$ of $S$ (with $\Sigma^*\subset\Sigma$) with the property that $S\setminus\Sigma$ is a smooth orientable hypersurface. Therefore we can apply Lemma \ref{lemma opening} with
  \[
  K=S\,,\qquad E=\emptyset\,,
  \]
  to find, for every $\eta,\de\in(0,1)$, an open subset $F$ of $\Om$ such that $\pa F$ is $\H^n$-rectifiable, $S\subset\Om\cap\pa F$, and
  \[
  \limsup_{\de\to 0^+}  \,\limsup_{\eta\to 0^+}\,\H^n\big(\Om\cap\pa F\big)\le \F(S,\emptyset)=2\,\ell\,.
  \]
  Let $\{F_j\}$ correspond to $\de_j\to 0^+$ and $\eta_j\to 0^+$ so that $\limsup_j\H^n(\Om\cap\pa F_j)\le 2\,\ell$, and notice that, by construction, $|F_j|\to 0^+$. We can thus define $E_j=F_j\cup B_{r_j}(p)$ where $r_j$ is such that $|B_{r_j}(p)|=\e-|F_j|$ and where $p$ is such that $\cl[B_{r_j}(p)]$ is disjoint from $W\cup\cl(F_j)$: the resulting sets are competitors for $\psi(\e)$, and their existence implies the validity of \eqref{step one from thm 1.4 kms}.
\end{proof}

\bibliographystyle{is-alpha}
\bibliography{references_mod}
\end{document}